\def\epsilon{\varepsilon}
\def\phi{\varphi}
\newtheorem{theorem}{Theorem}[section]
\newtheorem{lemma}[theorem]{Lemma}
\newtheorem{corollary}[theorem]{Corollary}
\newtheorem{proposition}[theorem]{Proposition}
\newtheorem{definition}[theorem]{Definition}
\newtheorem{remark}[theorem]{Remark}
\newcommand{\beqa}{\begin{eqnarray*}}
\newcommand{\eeqa}{\end{eqnarray*}}
\newcommand{\field}[1]{\mathbb{#1}}
\newcommand{\bR}{\field{R}}        
\newcommand{\bN}{\field{N}}        
\newcommand{\bZ}{\field{Z}}        
\newcommand{\bC}{\field{C}}        
\def\cS{\mathcal{S}}
\def\a{\aleph}
\def\rd{\bR^d}
\def\rdd{{\bR^{2d}}}
\def\cQ{\mathcal{Q}}
\def\R{\right)}
\def\<{\left<}
\def\>{\right>}
\def\mv1{M_v^1}
\def\mn{(m,n)}
\def\mn'{(m',n')}
\def\a{\alpha}
\def\R{\mathbb{R}}
\def\Ren{\mathbb{R}^d}
\def\Sn2{S_{2}(L^{2}(\Ren))}
\def\S1{S_{1}(L^{2}(\Ren))}
\def\sig00{\sigma_{0,0}}
\begin{document}
\begin{abstract}
Consider a non-negative self-adjoint operator $H$ in $L^2(\rd)$. We suppose that its heat operator $e^{-tH}$ satisfies an off-diagonal algebraic decay estimate, for some exponents $p_0\in[0,2)$. Then we prove sharp $L^p\to L^p$ frequency truncated estimates for the Schr\"odinger group $e^{itH}$ for $p\in[p_0,p'_0]$. \par
In particular, our results apply to every operator of the form $H=(i\nabla+A)^2+V$, with a magnetic potential $A\in L^2_{loc}(\rd,\rd)$ and an electric potential $V$ whose positive and negative parts are in the local Kato class and in the Kato class, respectively.
\end{abstract}

\title{Sharp $L^p$ estimates for Schr\"odinger groups}

\author{Piero D'Ancona \and Fabio Nicola}
\address{Sapienza -- Universit\`a di Roma, Dipartimento di Matematica,
Piazzale A. Moro 2, I-00185 Roma, Italy}
\address{Dipartimento di Scienze Matematiche, Politecnico di
Torino, Corso Duca degli
Abruzzi 24, 10129 Torino,
Italy}
\email{dancona@mat.uniroma1.it}
\email{fabio.nicola@polito.it}

\email{}
\thanks{}

\subjclass[2000]{42B15, 35P99}
\keywords{Spectral multipliers, Schr\"odinger group, heat kernel}
\maketitle
\section{Introduction}
It is well-known that the Schr\"odinger group $e^{it\Delta}$ is bounded on $L^p(\rd)$ only for $p=2$ (if $t\not=0$). However, frequency truncated estimates still hold, which can for instance be phrased in the form
\begin{equation}\label{zero}
\|e^{it\Delta}\phi(2^{-k}D) f\|_{L^p}\lesssim (1+2^{2k}|t|)^{s}\|f\|_{L^p},\quad k\in\mathbb{Z},\ t\in\R,
\end{equation}
for $1\leq p\leq\infty$, $s=d\big|\frac{1}{2}-\frac{1}{p}\big|$, where $\phi\in C^\infty_c (\R^d)$ is a cut-off function. This result follows at once from the stationary phase theorem and is  sharp, both for the growth in $t$ and for the loss of derivatives, i.e.\ the factor $2^{2k}$ in the right-hand side (see \cite{BTW,lanconelli,sjo70}).\par
In this paper we show a far-reaching generalization of this result, to every self-adjoint non-negative operator $H$ in $L^2(\rd)$, whose heat operator $e^{-tH}$ satisfies a mild smoothness effect and a mild off-diagonal decay. \par
Concerning strong $(p,p)$ estimates of spectral multipliers, recently much attention has been devoted to minimal assumptions on $H$. A condition which is nowadays common in the literature, after \cite{DOS02,heb}, and that already covers a lot of interesting operators, is a pointwise Gaussian estimate for the heat kernel $p_t(x,y)$ of $e^{-tH}$, namely 
\begin{equation}\label{cal}
|p_t(x,y)|\lesssim t^{-d/m} \exp\Big(-b\big(t^{-1/m}|x-y|\big)^{\frac{m}{m-1}}\Big),\quad \ t>0,\ x,y\in\rd,
\end{equation}
for some $b>0$, $m>1$. For example, every Schr\"odinger operator with an electromagnetic potential, under very natural assumptions, satisfies such estimate with $m=2$ \cite{CD,dancona}. As another example, the operator $H=(-\Delta)^{k}$, with $k\geq 1$ integer, satisfies \eqref{cal} with $m=2k$.\par
Recently, motivated by Schr\"odinger operators with bad potentials \cite{sv94} or higher order operators with measurable coefficients \cite{dav95}, the assumptions on $H$ were further weakened in the form of the so-called Generalized Gaussian Estimates, namely
\begin{equation}\label{gge}
\|\mathbf{1}_{B(x,t^{1/m})} e^{-tH} \mathbf{1}_{B(y,t^{1/m})}\|_{L^{p_0}\to L^{p'_0}}\lesssim t^{-\frac{d}{m}\big(\frac{1}{p_0}-\frac{1}{p'_0}\big)}\exp\Big(-b\big(t^{-1/m}|x-y|\big)^{\frac{m}{m-1}}\Big),
\end{equation}
for some $p_0\in[1,2)$ and every $t>0,\ x,y\in\rd$, where $b>0$ and $m>1$; see \cite{Blu03,Blu07,KU12,KU12bis}. When $p_0=1$, \eqref{gge} is in fact equivalent to \eqref{cal} \cite{BK02}.  \par
In the present paper we will consider even weaker estimates, allowing off-diagonal algebraic decay. \par
For every $j\in{\bZ}$, let $\cQ_j$ be the collection of all dyadic cubes in $\rd$ with sidelength $2^{-j}$.  \par\bigskip
{\bf Assumption (H)} {\it Assume that $H$ is a self-adjoint non-negative operator in $L^2(\R^d)$, whose heat operator satisfies the following estimates. There exist $p_0\in[1,2)$, $m>0$ such that for every $t>0$ and $j\in\bZ$, with $2^{-j}\leq t^{1/m}<2^{-j+1}$, we have
\begin{equation}\label{uno}
\sup_{Q'\in\cQ_j}\sum_{Q\in\cQ_j} \|\mathbf{1}_Q e^{-tH} \mathbf{1}_{Q'}\|_{L^{p_0}\to L^{p'_0}}\lesssim 2^{jd\big(\frac{1}{p_0}-\frac{1}{p'_0}\big)}
\end{equation}
and
\begin{equation}\label{due}
\sup_{Q'\in\cQ_j}\sum_{Q\in\cQ_j} (1+2^j{\rm dist}\,(Q,Q'))^{N}\|\mathbf{1}_Q e^{-tH} \mathbf{1}_{Q'}\|_{L^{2}\to L^2}\lesssim 1,\qquad N=\lfloor d/2\rfloor+1.
\end{equation}
} \par\bigskip
Of course, if the pointwise bound \eqref{cal} holds, then \eqref{uno} is satisfied with $p_0=1$, as well as \eqref{due}. Also, \eqref{gge} implies \eqref{uno} and \eqref{due}. Notice however that there are operators satisfying \eqref{uno} and \eqref{due} but not \eqref{gge}. For example, for the fractional Laplacian $H=(-\Delta)^\alpha$, $\alpha>0$, \eqref{uno} is satisfied with $p_0=1$ for every $\alpha>0$, whereas \eqref{due} holds for $2\alpha> \lfloor d/2\rfloor+1$ (both with $m=2\alpha$); see Section 5 below. \par
Now, we can state our main result.
\begin{theorem}\label{mainteo}
Assume the hypothesis {\bf (H)}. Let $p\in[p_0,p'_0]$ and $s=d\big|\frac{1}{2}-\frac{1}{p}\big|$. Then the operator $e^{-itH}$ satisfies the estimate
\begin{equation}\label{1.4}
\|e^{-itH}\phi(2^{-k}H)f\|_{L^p}\lesssim(1+2^{k}|t|)^{s}\|f\|_{L^p},\quad k\in\bZ,\ t\in\R,
\end{equation}
uniformly for $\phi$ in bounded subsets of $C^\infty_c(\R)$. 
\end{theorem}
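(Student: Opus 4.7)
The proof splits naturally into a reduction to the endpoint $p=p_0$, a factorization through a complex-time heat semigroup, a dispersive bound for that semigroup, and a Schur-type summation on dyadic cubes.

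First, I would reduce to $p=p_0$. Since $\phi\in C^\infty_c\subset L^\infty$, the spectral theorem and the unitarity of $e^{-itH}$ yield the trivial bound $\|e^{-itH}\phi(2^{-k}H)\|_{L^2\to L^2}\leq\|\phi\|_\infty$. The adjoint $e^{itH}\overline{\phi}(2^{-k}H)$ is of the same form (with $t\to-t$ and $\phi\to\overline\phi$), so any $L^{p_0}\to L^{p_0}$ estimate dualizes to an $L^{p_0'}\to L^{p_0'}$ estimate, and Riesz--Thorin interpolation against the $L^2$ bound then produces \eqref{1.4} at every intermediate $p$ with the sharp exponent $s=d|1/2-1/p|$. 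It is therefore enough to establish
\[
\|e^{-itH}\phi(2^{-k}H)\|_{L^{p_0}\to L^{p_0}}\lesssim (1+2^k|t|)^{d(1/p_0-1/2)}.
\]

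Next, I would factor the operator through a complex-time heat semigroup. After a harmless Littlewood--Paley reduction I may assume $\phi$ supported away from the origin, so that $\tilde\phi(\lambda):=e^\lambda\phi(\lambda)$ again lies in $C^\infty_c$, uniformly for $\phi$ in bounded subsets. The functional-calculus identity
\[
e^{-itH}\phi(2^{-k}H)=e^{-(2^{-k}+it)H}\,\tilde\phi(2^{-k}H)
\]
separates the endpoint estimate into two subproblems: (i) the uniform $L^{p_0}$-boundedness of the smooth cutoff $\tilde\phi(2^{-k}H)$, and (ii) the dispersive bound $\|e^{-(2^{-k}+it)H}\|_{L^{p_0}\to L^{p_0}}\lesssim(1+2^k|t|)^{d(1/p_0-1/2)}$. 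Part~(i) is a H\"ormander-type spectral multiplier theorem for $H$, which under assumption~\textbf{(H)} can be extracted from the off-diagonal bounds \eqref{uno}--\eqref{due} by Calder\'on--Zygmund machinery in the style of Blunck and Kunstmann--Uhl.

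The heart of the argument is (ii), which I would attack by a dyadic spatial decomposition into cubes $Q\in\cQ_j$ at the natural heat-scale $2^{-j}\sim 2^{-k/m}$, reducing the operator norm to a Schur sum of matrix entries $a_{Q,Q'}=\|\mathbf{1}_Q e^{-(2^{-k}+it)H}\mathbf{1}_{Q'}\|_{L^{p_0}\to L^{p_0}}$. Each entry is estimated by a Stein complex-interpolation argument applied in a strip in $z$: the $L^2$ off-diagonal decay \eqref{due} furnishes the imaginary-axis endpoint, while the extension of \eqref{uno} to the vertical line $\Re z=2^{-k}$ (obtained by analytic continuation with a Phragm\'en--Lindel\"of-type argument) furnishes the other endpoint with its $L^{p_0}\to L^{p_0'}$ cube estimate. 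The resulting $a_{Q,Q'}$ decays polynomially in $2^j\mathrm{dist}(Q,Q')$ with exponent making it summable (thanks to the choice of $N$ in \eqref{due}) and grows at most like $(1+2^k|t|)^{d(1/p_0-1/2)}$. Summing by Schur's test over the roughly $(1+2^k|t|)^d$ cubes inside the Schr\"odinger ``light cone'' $\mathrm{dist}(Q,Q')\lesssim 2^{-k/m}(1+2^k|t|)$ then closes the argument.

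The main obstacle is precisely the sharp exponent $d(1/p_0-1/2)$ in (ii). A naive bound based on summing \eqref{uno} alone yields only the doubled exponent $d(1/p_0-1/p_0')=2d(1/p_0-1/2)$; the factor-of-two improvement requires the Stein interpolation to use \eqref{due} as a true $L^2$ pivot between the endpoints $L^{p_0}$ and $L^{p_0'}$, carried out entrywise at the level of the matrix entries $\mathbf{1}_Q e^{-zH}\mathbf{1}_{Q'}$. Ensuring that both the polynomial off-diagonal decay and the correct temporal growth survive this interpolation simultaneously is the delicate technical point on which the sharpness of the theorem ultimately rests.
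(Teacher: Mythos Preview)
Your reduction to $p=p_0$ by duality and interpolation is correct and matches the paper, and your part (i) is essentially Theorem~\ref{mainteo0} (though the references you cite assume the stronger generalized Gaussian estimates \eqref{gge}, not the polynomial bounds of {\bf (H)}; the paper proves it from scratch in Section~3 via the resolvent and amalgam spaces).

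The genuine gap is part (ii). You propose to bound $e^{-(2^{-k}+it)H}$ by Stein interpolation/Phragm\'en--Lindel\"of, extending \eqref{uno}--\eqref{due} from the positive real axis to the vertical line $\Re z=2^{-k}$. But a three-lines argument needs bounds on \emph{two} boundary lines of a strip, and here you only have them on the real segment $z>0$: on the imaginary axis $z=is$ the operator $e^{-isH}$ is merely unitary on $L^2$, with no off-diagonal localization whatsoever---that is exactly the information you are trying to produce. The standard route from real-time to complex-time off-diagonal bounds is Davies' exponential-weight perturbation $e^{\rho\psi}e^{-tH}e^{-\rho\psi}$, which needs \emph{Gaussian} (exponential) off-diagonal decay as in \eqref{cal} or \eqref{gge}. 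Under the merely \emph{polynomial} decay of {\bf (H)} that machinery is unavailable; indeed the paper highlights the fractional Laplacian $(-\Delta)^\alpha$ as an operator satisfying {\bf (H)} but not \eqref{gge}, precisely a case where your analytic-continuation step has no known justification. So under {\bf (H)} alone your argument for (ii) does not go through.

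The paper avoids complex time altogether. It works with the resolvent $R=(I+H)^{-1}=\int_0^\infty e^{-t}e^{-tH}\,dt$, for which the polynomial off-diagonal decay \eqref{due} is enough to control the iterated commutators ${\rm Ad}^k_l(R)$ on $L^2$ (Proposition~\ref{pro9}, formula \eqref{eq15}). The Schr\"odinger group enters only through the Duhamel-type identity $R[x_l,H]R=-{\rm Ad}_l(R)$, which converts ${\rm Ad}_l(R^{2^{k+1}-2}e^{-itH})$ into a finite combination of terms built from $R^{2^k-2}e^{-isH}$ and ${\rm Ad}_l(R^\mu)$ (Proposition~\ref{pro1.3}); an induction then gives $\|{\rm Ad}^l(R^{2^{k+1}-2}e^{-itH})\|_{L^2\to L^2}\lesssim(1+|t|)^l$. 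Feeding this into the Jensen--Nakamura criterion (Theorem~\ref{teo7}) yields the $X^{p_0,2}$ bound with the sharp growth $(1+|t|)^{d(1/p_0-1/2)}$, and the smoothing $R^\beta:L^{p_0}\to X^{p_0,2}$ (Proposition~\ref{pro8}) together with Theorem~\ref{mainteo0} closes the loop. The whole argument stays on the real axis and uses only the polynomial bounds in {\bf (H)}.
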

This result is sharp both for the growth in $t$ and for the loss of derivatives, in the sense that when $H=-\Delta$ (and $k$ is replaced by $2k$) it reduces to that mentioned above, which is sharp. Moreover, it is new even for operators satisfying \eqref{cal} with $m=2$. \par
 Besides the case of the free Laplacian, estimate \eqref{1.4} was first proved for $H=-\Delta+V$ in \cite[Theorem 1.4]{JN95} and in \cite[Theorem 5.2]{JN95} under several assumptions on $V$. Actually, we shall see in Theorem \ref{mainteo3} below that we can consider any operator for the form
\[
H=(i\nabla+A)+V,
\]
with a magnetic potential $A\in L^2_{loc}(\rd;\rd)$, and an electric potential $V$ with positive and negative parts $V_+\in\mathcal{K}_{loc}(\rd)$, $V_-\in \mathcal{K}(\rd)$, where $\mathcal{K}(\rd)$ is the Kato class (see Definition \ref{defkato} below). Although this operator is just bounded from below, the conclusions of Theorem \ref{mainteo} still hold for $H$ at least for $k\geq0$.\par
 The proof is inspired by \cite{JN95}.  In short, by duality and interpolation we are reduced to prove the result for $p={p_0}$. Then we exploit the smoothing effect \eqref{uno} of the heat operator  to reduce matters to a continuity result in certain amalgam spaces of functions with an upgraded $L^2$ local regularity ($p_0<2$) and an $\ell^{p_0}$ decay at infinity (on average). Technically, we need a version of these spaces adpapted to different scales, as in \cite{tao}. Then, the off-diagonal decay \eqref{due} of the heat operator is used to prove the desired estimates in such spaces. \par
As an intermediate step, we prove strong $(p_0,p_0)$ estimates for $\phi(H)$, $\phi\in C^\infty_c(\R)$. It would be interesting to know whether the assumption ${\bf (H)}$ (or a variant of it) is still sufficient  for more general spectral multipliers theorems, where $\phi$ satisfies H\"ormander type conditions \cite{CD,DOS02,heb,KU12}. We plan to investigate this issue in a subsequent work. \par
As another remark, we observe that we could allow an exponential factor $\exp(ct)$, $c>0$, in the right-hand sides of \eqref{uno} and \eqref{due}, provided the conclusion of Theorem \ref{mainteo} is restricted to $k\geq0$ (see Section 5.1). \par
As a consequence of Theorem \ref{mainteo} we obtain, by a standard scaling argument \cite[pag. 193]{JN94}, an estimate in Sobolev spaces adapted to the operator $H$. 
\begin{corollary}
Assume the hypothesis {\bf (H)}. Let $p\in[p_0,p'_0]$ and $s=d\big|\frac{1}{2}-\frac{1}{p}\big|$. Then for every $\epsilon>0$ the operator $e^{-itH}$ satisfies the estimates
\begin{equation}\label{1.4bis}
\|e^{-itH}(I+H)^{-s-\epsilon}f\|_{L^p}\lesssim(1+|t|)^{s}\|f\|_{L^p},\quad t\in\R.
\end{equation}
\end{corollary}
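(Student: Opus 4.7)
The plan is to reduce \eqref{1.4bis} to the frequency-truncated estimate \eqref{1.4} via a Littlewood--Paley decomposition adapted to the spectrum of $H$. Since $H\geq 0$, I would fix a standard dyadic partition $1=\psi(\lambda)+\sum_{k\geq 0}\phi(2^{-k}\lambda)$ valid for $\lambda\geq 0$, with $\psi,\phi\in C^\infty_c(\R)$ and $\mathrm{supp}\,\phi\subset[1,4]$. Under the spectral calculus this yields
\[
(I+H)^{-s-\epsilon}f=\tilde\psi(H)f+\sum_{k\geq 0}\phi(2^{-k}H)(I+H)^{-s-\epsilon}f,\qquad \tilde\psi(\lambda):=\psi(\lambda)(1+\lambda)^{-s-\epsilon}\in C^\infty_c(\R),
\]
at least on a dense subspace.

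The core algebraic step is to rewrite each high-frequency piece in a form directly amenable to Theorem \ref{mainteo}. Using $(1+2^k\mu)^{-s-\epsilon}=2^{-k(s+\epsilon)}(2^{-k}+\mu)^{-s-\epsilon}$, I can factor
\[
\phi(2^{-k}H)(I+H)^{-s-\epsilon}=2^{-k(s+\epsilon)}\eta_k(2^{-k}H),\qquad \eta_k(\mu):=\phi(\mu)(2^{-k}+\mu)^{-s-\epsilon}.
\]
Since $\phi$ is supported in $[1,4]$ and $k\geq 0$, the factor $(2^{-k}+\mu)^{-s-\epsilon}$ and all its $\mu$-derivatives are bounded on $[1,4]$ uniformly in $k$, so $\{\eta_k\}_{k\geq 0}$ forms a bounded subset of $C^\infty_c(\R)$. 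Theorem \ref{mainteo} then gives, uniformly in $k\geq 0$,
\[
\|e^{-itH}\phi(2^{-k}H)(I+H)^{-s-\epsilon}f\|_{L^p}\lesssim 2^{-k(s+\epsilon)}(1+2^k|t|)^s\|f\|_{L^p}\lesssim 2^{-k\epsilon}(1+|t|)^s\|f\|_{L^p},
\]
where the last step uses $(1+2^k|t|)^s\leq 2^{ks}(1+|t|)^s$ for $k\geq 0$. Summing the geometric series $\sum_{k\geq 0}2^{-k\epsilon}<\infty$ and combining with the low-frequency bound $\|e^{-itH}\tilde\psi(H)f\|_{L^p}\lesssim(1+|t|)^s\|f\|_{L^p}$, which is a direct application of Theorem \ref{mainteo} at $k=0$ to $\tilde\psi$, produces \eqref{1.4bis}.

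The main subtlety I anticipate is justifying convergence of the Littlewood--Paley series in $L^p$, since the spectral theorem only provides convergence on $L^2$. I would address this by first establishing \eqref{1.4bis} on the dense subspace $L^p\cap L^2$: there, $L^2$-convergence of the partial sums is automatic, and the uniform-in-$k$ bound above (evaluated at $t=0$) shows the partial sums are Cauchy in $L^p$, so the two limits coincide and the estimate passes to the limit. The estimate then extends to all of $L^p$ by density. This is essentially the standard scaling argument referenced in \cite[p.~193]{JN94}.
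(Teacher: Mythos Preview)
Your proof is correct and is precisely the ``standard scaling argument'' that the paper invokes by citing \cite[p.~193]{JN94} without further detail. You have supplied exactly the Littlewood--Paley decomposition and the rescaling $\phi(2^{-k}H)(I+H)^{-s-\epsilon}=2^{-k(s+\epsilon)}\eta_k(2^{-k}H)$ with $\{\eta_k\}$ bounded in $C^\infty_c(\R)$ that the reference contains, so there is no methodological difference to discuss.
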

This result improves \cite[Theorem 1.4]{JN94} and (at least in the Euclidean setting) \cite[Theorem 5.2]{CCO} and \cite[Theorem 1.3 (b)]{Blu07}, where an additional $\epsilon$-loss occured in the exponent of $t$ (and moreover the stronger estimates \eqref{gge} were assumed). \par
Another interesting issue is the validity of \eqref{1.4bis} with $\epsilon=0$. Indeed, for $H=-\Delta$ in $\R^d$ and $1<p<\infty$, the estimate \eqref{1.4bis} was proved with $\epsilon=0$ (and $t=1$) in \cite{miyachi}, but this sharp form seems out of reach in the present generality, even for fixed $t$. However, using some results of Time-frequency Analysis we will see that estimates such as \eqref{1.4bis} with $\epsilon=0$ indeed hold for a large class of propagators, essentially any operator which is bounded in the so-called modulation spaces (see Section 5 for the definition). This is just a remark, which however seems to be new. Details and examples are given in Section 5.\par
The paper is organized as follows. In Section 2 we prove some preliminary results and we define the above mentioned amalgam spaces, together with a criterion of boundedenss. In Section 3 we prove strong $(p,p)$ estimates for the operator $\phi(H)$, for $p\in[p_0,p'_0]$, and $\phi\in C^\infty_c(\R)$. This will be used in the proof of Theorem \ref{mainteo}, which is given in Section 4. Finally in Section 5 we discuss examples of operators which Theorem \ref{mainteo} applies to, and the above mentioned connection with Time-frequency Analysis.
\section{Preliminary results}
\subsection{Some remarks on assumption {\bf (H)}} For future reference, we collect here some comments on the assumption {\bf (H)}.
\begin{remark}\label{rem1}
Let us notice that for a linear operator $A$ and $1\leq p,q \leq \infty$ we have 
\[
\|\mathbf{1}_Q A\|_{L^p\to L^q}\leq \|\mathbf{1}_{Q'} A\|_{L^p\to L^q},\qquad \|A\mathbf{1}_Q\|_{L^p\to L^q}\leq \|A\mathbf{1}_{Q'}\|_{L^p\to L^q}
\]
if $Q\subset Q'$ are measurable sets. Moreover, if $\mathbf{1}_Q=\sum_{k=1}^m \mathbf{1}_{Q_k}$ (pointwise almost everywhere) then 
\[
\|\mathbf{1}_Q A\|_{L^p\to L^q}\leq \sum_{k=1}^m\|\mathbf{1}_{Q_k} A\|_{L^p\to L^q},\quad \|A\mathbf{1}_Q\|_{L^p\to L^q}\leq \sum_{k=1}^m\|A\mathbf{1}_{Q_k} \|_{L^p\to L^q}.
\]
This implies that, for any given $M\in\bN$ the estimates \eqref{uno} and \eqref{due} hold for every $j$ with $2^{-j-M}\leq t^{1/m}<2^{-j+M}$, where the constant implicit in the notation $\lesssim$ will depend on $M$. 
\end{remark}
\begin{remark}\label{rem2}
The estimate \eqref{uno} is equivalent to the couple of estimates
\begin{equation}\label{tre}
\sup_{Q'\in\cQ_j}\sum_{Q\in\cQ_j} \|\mathbf{1}_Q e^{-tH} \mathbf{1}_{Q'}\|_{L^{p_0}\to L^{2}}\lesssim 2^{jd\big(\frac{1}{p_0}-\frac{1}{2}\big)}
\end{equation}
and
\begin{equation}\label{quattro}
\sup_{Q\in\cQ_j}\sum_{Q'\in\cQ_j} \|\mathbf{1}_Q e^{-tH} \mathbf{1}_{Q'}\|_{L^{p_0}\to L^{2}}\lesssim 2^{jd\big(\frac{1}{p_0}-\frac{1}{2}\big)}.
\end{equation}
Indeed, \eqref{tre} follows from \eqref{uno} and H\"older's inequality, because $|Q|=2^{-jd}$. Moreover, if \eqref{uno} holds as stated then it also holds, by duality, with $Q$ and $Q'$ exchanged in the sum and supremum, so that \eqref{tre} holds with the same exchange, which is \eqref{quattro}.\par
Viceversa, assume \eqref{tre} and \eqref{quattro}. Then we have 
\begin{align*}
 \|\mathbf{1}_Q e^{-tH} \mathbf{1}_{Q'}\|_{L^{p_0}\to L^{p'_0}}&\leq\sum_{Q''\in\cQ_j} \|\mathbf{1}_Q e^{-(t/2)H} \mathbf{1}_{Q''}e^{-(t/2)H}\mathbf{1}_{Q'}\|_{L^{p_0}\to L^{p'_0}}\\
 &\leq\sum_{Q''\in\cQ_j} \|\mathbf{1}_Q e^{-(t/2)H} \mathbf{1}_{Q''}\|_{L^{2}\to L^{p'_0}} \|\mathbf{1}_{Q''}e^{-(t/2)H}\mathbf{1}_{Q'}\|_{L^{p_0}\to L^{2}},
\end{align*}
and \eqref{uno} follows from Remark \ref{rem1}, \eqref{tre} and the dual version of \eqref{quattro}. 
\end{remark}
\begin{remark}\label{rem3} The estimate \eqref{due} implies, by duality, 
\begin{equation}\label{duebis}
\sup_{Q\in\cQ_j}\sum_{Q'\in\cQ_j} (1+2^j{\rm dist}\,(Q,Q'))^{N}\|\mathbf{1}_Q e^{-tH} \mathbf{1}_{Q'}\|_{L^{2}\to L^2}\lesssim 1.
\end{equation}
\end{remark}
\begin{remark}\label{rem4} Let
\[
S_\lambda f(x)=f(\lambda x),\quad\lambda>0.
\]
Observe that if the assumption ${\bf (H)}$ holds for the operator $H$ then it also holds for the operator
\[
H_k:= 2^k S_{\lambda_k}H S_{\lambda_k}^{-1},\qquad \lambda_k=2^{k/m},\quad k\in\bZ,
\]
uniformly with respect to $k$. \par
Indeed, by the spectral calculus we have
\[
e^{-tH_k}=S_{\lambda_k} e^{-2^k t H} S_{\lambda_k}^{-1}
\]
and for $Q,Q'\in\cQ_j$, with $2^{-j}\leq t^{1/m}<2^{-j+1}$, we have 
\[
\mathbf{1}_Q e^{-tH_k} \mathbf{1}_{Q'}=S_{\lambda_k} \mathbf{1}_{\lambda_k Q} e^{-2^k t H} \mathbf{1}_{\lambda^k Q'}S_{\lambda_k}^{-1},
\]
with $\lambda Q:=\{\lambda x:\, x\in Q\}$.\par Moreover, for a linear operator $A$ and $1\leq p,q\leq \infty$ we have 
\[
\|S_\lambda A S_\lambda^{-1}\|_{L^p\to L^q}=\lambda^{d\big(\frac{1}{p}-\frac{1}{q}\big)},\quad \lambda>0.
\]
Hence, it is sufficent to apply the assumption ${\bf (H)}$ with $t$ replaced by $2^k t$. More precisely, the involved cubes should be those dyadic having sidelength $2^{-M}$, with $2^{-M}\leq (2^k t)^{1/m}<2^{-M+1}$, but one can cover each of the cubes $\lambda_k Q'$ and $\lambda_k Q'$ (whose sidelength is exactly $(2^k t)^{1/m}$) by using $2^d$ of such dyadic cubes (cf. Remark \ref{rem1}).
\end{remark}

\subsection{Amalgam spaces \cite{dav95,heil,JN94,tao}} For $1\leq p,q\leq\infty$, $j\in\mathbb{Z}$, consider the space $X^{p,q}_j$ of measurable functions in $\rd$ equipped with the norm
\[
\|f\|_{X^{p,q}_j}:=\Big(\sum_{Q\in\cQ_j} \|\mathbf{1}_{Q}f\|_{L^q}^p\Big)^{1/p}
\]
(with obvious changes if $q=\infty$). As above $\cQ_j$ denotes the collection of dyadic cubes of sidelength $2^{-j}$.  We also set $X^{p,q}=X^{p,q}_0$.\par
Notice that $X^{p,p}_j=L^p$ for every $j$ and $p$. Moreover we will need the following embeddings.
\begin{proposition}\label{pro5}
For $1\leq p\leq q\leq\infty$, $j\in\bZ$, we have 
\begin{equation}\label{pro5eq1}
\|f\|_{X^{p,q}}\leq \max\{1,2^{-jd\big(\frac{1}{p}-\frac{1}{q} \big)}\}\|f\|_{X^{p,q}_j}.
\end{equation}
\end{proposition}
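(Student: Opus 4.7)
The plan is to reduce everything to a comparison between the $L^q$-norms on unit cubes and the $L^q$-norms on dyadic cubes of sidelength $2^{-j}$, and to handle the cases $j\geq 0$ and $j<0$ separately. In both cases the key observation is that for any $Q\in\mathcal{Q}_0$ and $Q'\in\mathcal{Q}_j$ either one is contained in the other (up to a boundary null set), so that $\|\mathbf{1}_{Q}f\|_{L^q}^q$ decomposes cleanly into a sum of contributions from the other scale. The factor $\max\{1,2^{-jd(1/p-1/q)}\}$ then arises from passing between the $\ell^p$ and $\ell^q$ norms over the finite families of subcubes.

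First consider $j\geq 0$. Each unit cube $Q_0\in\mathcal{Q}_0$ is a disjoint union of $2^{jd}$ cubes $Q\in\mathcal{Q}_j$, so
\[
\|\mathbf{1}_{Q_0}f\|_{L^q}^q=\sum_{\substack{Q\in\mathcal{Q}_j\\ Q\subset Q_0}}\|\mathbf{1}_Q f\|_{L^q}^q.
\]
Since $p\leq q$ the embedding $\ell^p\hookrightarrow\ell^q$ applied to the finite sequence $(\|\mathbf{1}_Q f\|_{L^q})_{Q\subset Q_0}$ yields $\|\mathbf{1}_{Q_0}f\|_{L^q}\leq\big(\sum_{Q\subset Q_0}\|\mathbf{1}_Q f\|_{L^q}^p\big)^{1/p}$. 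Taking $p$-th powers and summing over $Q_0\in\mathcal{Q}_0$ gives $\|f\|_{X^{p,q}}\leq \|f\|_{X^{p,q}_j}$, which matches the claim since $\max\{1,2^{-jd(1/p-1/q)}\}=1$ in this range.

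Now consider $j<0$. This time each $Q\in\mathcal{Q}_j$ is a disjoint union of $2^{-jd}$ unit cubes $Q_0\in\mathcal{Q}_0$, and the roles are reversed: the identity $\|\mathbf{1}_Q f\|_{L^q}^q=\sum_{Q_0\subset Q}\|\mathbf{1}_{Q_0}f\|_{L^q}^q$ holds. Now one goes from $\ell^q$ to $\ell^p$ on a family of $N=2^{-jd}$ terms, which costs a factor $N^{1/p-1/q}$ by H\"older's inequality, giving
\[
\Big(\sum_{Q_0\subset Q}\|\mathbf{1}_{Q_0}f\|_{L^q}^p\Big)^{1/p}\leq 2^{-jd(1/p-1/q)}\|\mathbf{1}_Q f\|_{L^q}.
\]
Raising to the $p$-th power and summing over $Q\in\mathcal{Q}_j$, the left-hand side becomes $\|f\|_{X^{p,q}}^p$, while the right-hand side becomes $2^{-jdp(1/p-1/q)}\|f\|_{X^{p,q}_j}^p$. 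This yields the bound with the prefactor $2^{-jd(1/p-1/q)}\geq 1$, again matching the claim.

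There is no real obstacle: the proposition is essentially the standard inclusion between $\ell^p$ spaces with differing scales, and the proof is a bookkeeping exercise that uses only the subadditivity of $\|\mathbf{1}_Q f\|_{L^q}^q$ with respect to disjoint decomposition of $Q$ and the elementary inequalities between $\ell^p$ and $\ell^q$ norms on $N$-term sequences. The only point to watch is the orientation of the inequality between $\ell^p$ and $\ell^q$ depending on the sign of $j$, which is precisely what produces the $\max$ on the right-hand side.
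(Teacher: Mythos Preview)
Your proof is correct and follows essentially the same approach as the paper: both arguments split into the cases $j\geq 0$ and $j<0$, decompose the larger cube into the appropriate number of smaller dyadic cubes, and then use the inclusion $\ell^p\hookrightarrow\ell^q$ (for $j\geq 0$) or H\"older's inequality on $N=2^{-jd}$ terms (for $j<0$) to compare the $\ell^p$ and $\ell^q$ norms of the local $L^q$ pieces. Apart from the order in which the two cases are treated, there is no substantive difference.
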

\begin{proof}
Consider first the case $j<0$. Then we prove that, if $\tilde{Q}\in\cQ_{j}$, 
\[
\Big(\sum_{\cQ_0\ni Q\subset \tilde{Q}}\|\mathbf{1}_Q f\|^p_{L^q}\Big)^{1/p}\leq 2^{-jd\big(\frac{1}{p}-\frac{1}{q} \big)}\|\mathbf{1}_{\tilde{Q}} f\|_{L^q}.
\]
Since \[
\|\mathbf{1}_{\tilde{Q}} f\|_{L^q}=\Big(\sum_{\cQ_0\ni Q\subset \tilde{Q}}\|\mathbf{1}_Q f\|^q_{L^q}\Big)^{1/q},
\]
 the result follows from H\"older's inequality for finite sequences, with $N=2^{-jd}$ elements.\par
Consider now the case $j\geq0$. We prove that, if $\tilde{Q}\in\cQ_0$, 
\[
\|\mathbf{1}_{\tilde{Q}} f\|_{L^q}\leq \Big(\sum_{\cQ_j\ni Q\subset \tilde{Q}}\|\mathbf{1}_Q f\|^p_{L^q}\Big)^{1/p}.
\]
Again, the left-hand side is equal to $\Big(\sum_{\cQ_j\ni Q\subset \tilde{Q}}\|\mathbf{1}_Q f\|^q_{L^q}\Big)^{1/q}$, and the result follows, because $p\leq q$.
\end{proof}

Here is an elementary criterion for boundedness on $X^{p,q}$.
\begin{proposition}\label{pro6}
Let $A$ be a linear operators satisfying, for some $1\leq q_1,q_2\leq\infty$, $j\in\mathbb{Z}$,
\[
\sup_{Q'\in\cQ_j}\sum_{Q\in\cQ_j}\|\mathbf{1}_Q A \mathbf{1}_{Q'}\|_{L^{q_1}\to L^{q_2}}=M_1<\infty,
\]
and 
\[
\sup_{Q\in\cQ_j}\sum_{Q'\in\cQ_j}\|\mathbf{1}_Q A \mathbf{1}_{Q'}\|_{L^{q_1}\to L^{q_2}}=M_2<\infty.
\]

Then, for every $1\leq p\leq\infty$,
\[
\|Af\|_{X_j^{p,q_2}}\leq M_1^{1-\theta}M_2^\theta \|f\|_{X_j^{p,q_1}},\quad \frac{1}{p}=1-\theta.
\]
\begin{proof}
The desired estimate follows at once from the definition of the spaces $X^{p,q}_j$ and Schur's test for operators acting on sequences. 
\end{proof}
\end{proposition}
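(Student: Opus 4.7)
The plan is to reduce the amalgam estimate to a sequence-space inequality and then invoke Schur's test. First, I would decompose the input by the dyadic partition: since $\sum_{Q'\in\cQ_j}\mathbf{1}_{Q'}=1$ a.e., one has $Af=\sum_{Q'\in\cQ_j}A\mathbf{1}_{Q'}f$, so for each $Q\in\cQ_j$ the triangle inequality in $L^{q_2}$ gives
\[
\|\mathbf{1}_Q Af\|_{L^{q_2}}\leq \sum_{Q'\in\cQ_j}\|\mathbf{1}_Q A\mathbf{1}_{Q'}\|_{L^{q_1}\to L^{q_2}}\,\|\mathbf{1}_{Q'}f\|_{L^{q_1}}.
\]
Setting $a_{Q,Q'}:=\|\mathbf{1}_Q A\mathbf{1}_{Q'}\|_{L^{q_1}\to L^{q_2}}$ and $c_{Q'}:=\|\mathbf{1}_{Q'}f\|_{L^{q_1}}$, the definition of the amalgam norm shows it suffices to prove that the linear map $T:(c_{Q'})\mapsto (\sum_{Q'} a_{Q,Q'}c_{Q'})$ is bounded on $\ell^p(\cQ_j)$ with the claimed norm, since $\|Af\|_{X_j^{p,q_2}}\leq \|T(c_{Q'})\|_{\ell^p}$ and $\|(c_{Q'})\|_{\ell^p}=\|f\|_{X_j^{p,q_1}}$.

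Now the hypotheses are precisely the Schur conditions for the matrix $(a_{Q,Q'})$: the row sums are uniformly bounded by $M_2$, and the column sums are uniformly bounded by $M_1$. Thus I would apply the standard Schur test on sequence spaces: for $1\leq p\leq\infty$, a non-negative matrix with row sums $\leq A$ and column sums $\leq B$ defines an operator on $\ell^p$ of norm at most $A^{1/p'}B^{1/p}$. The short argument uses H\"older's inequality applied to the splitting $a_{Q,Q'}=a_{Q,Q'}^{1/p}\cdot a_{Q,Q'}^{1/p'}$ inside the sum $\sum_{Q'}a_{Q,Q'}c_{Q'}$, bounding one factor by the row sum $M_2^{1/p'}$ and using Fubini together with the column-sum bound $M_1$ after raising to the $p$-th power.

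Applied with $A=M_2$, $B=M_1$ and the substitution $\theta=1/p'=1-1/p$, this yields $\|T\|_{\ell^p\to\ell^p}\leq M_1^{1-\theta}M_2^{\theta}$, which gives the stated inequality. The endpoint cases $p=1$ and $p=\infty$ follow directly from the column-sum and row-sum bounds, respectively, and match the general formula. There is no real obstacle here; the only point to watch is the matching between the statement's convention $1/p=1-\theta$ and Schur's conclusion, which pins down $\theta=1/p'$ so that $M_1^{1/p}M_2^{1/p'}=M_1^{1-\theta}M_2^\theta$.
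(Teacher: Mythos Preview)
Your proof is correct and is precisely the argument the paper has in mind: the paper's proof is the one-line remark that the estimate follows from the definition of $X^{p,q}_j$ and Schur's test on sequences, and you have simply written out those details (the dyadic decomposition reducing to a matrix action on $\ell^p(\cQ_j)$, and the Schur bound $M_1^{1/p}M_2^{1/p'}$ matched to $\theta=1/p'$).
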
 

\subsection{A criterion for boundedness on $X^{p,2}$ \cite{JN94}}

We recall here a result from \cite{JN94} which gives a sufficient condition for a linear operator $A$ to be bounded on the spaces $X^{p,2}$, $1\leq p\leq 2$. 
\begin{theorem}\label{teo7}
Let $A$ be a bounded operator on $L^2$, and for any $l=1,\ldots, d$, define the commutator ${\rm Ad}_l(A)=[x_l,A]$. Suppose that for some $M\geq1$ we have  
\[
\|{\rm Ad}_l^k(A)\|_{L^2\to L^2}\leq M^k,\quad 0\leq k\leq \lfloor d/2\rfloor+1, \quad 1\leq l\leq d.
\]
Then, for $1\leq p\leq 2$,
\begin{equation}\label{limitl1}
\|Af\|_{X^{p,2}}\leq C_0M^{d\big(\frac{1}{p}-\frac{1}{2} \big)} \|f\|_{X^{p,2}}
\end{equation}
where the constant $C_0$ depends only on $d$ and upper bounds for $\|A\|_{L^2\to L^2}$.
\end{theorem}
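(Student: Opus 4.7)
The plan is to establish the endpoint estimate $\|Af\|_{X^{1,2}}\lesssim M^{d/2}\|f\|_{X^{1,2}}$ and then interpolate with the $L^2\to L^2$ hypothesis (recalling $X^{2,2}=L^2$) via complex interpolation of the mixed-norm scale $[X^{1,2},X^{2,2}]_\theta=X^{p,2}$ with $1/p=1-\theta/2$. This yields \eqref{limitl1} with $C_0=C_0(d,\|A\|_{L^2\to L^2})$.

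The core ingredient is a weighted $L^2\to L^2$ bound: for any $Q\in\cQ_0$ and any $y_0\in Q$,
\[
\bigl\|(1+|x-y_0|/M)^N A\mathbf{1}_{Q}\bigr\|_{L^2\to L^2}\lesssim 1.
\]
To prove it I would first handle the one-direction analogue $\|(x_l-y_{0,l})^N A\mathbf{1}_Q\|_{L^2\to L^2}\lesssim M^N$ for each $l=1,\dots,d$. This rests on the kernel-level binomial identity
\[
(x_l-y_{0,l})^N A=\sum_{k=0}^N\binom{N}{k}\,\mathrm{Ad}_l^k(A)\,(y_l-y_{0,l})^{N-k},
\]
which follows from $(x_l-y_{0,l})^N=((x_l-y_l)+(y_l-y_{0,l}))^N$ together with the fact that $\mathrm{Ad}_l^k(A)$ has integral kernel $(x_l-y_l)^k K(x,y)$. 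Restricting the input to $Q$ (sidelength $1$) yields $\|(y_l-y_{0,l})^{N-k}\mathbf{1}_Q\|_\infty\le d^{(N-k)/2}$, and the hypothesis $\|\mathrm{Ad}_l^k(A)\|_{L^2\to L^2}\le M^k$ then gives $\sum_k\binom{N}{k}M^k d^{(N-k)/2}=(M+\sqrt d)^N\lesssim M^N$ since $M\ge 1$. Combining the $d$ directional estimates through $|x-y_0|^N\le d^{N/2}\sum_l|x_l-y_{0,l}|^N$ and $(1+s)^N\le 2^N(1+s^N)$ delivers the weighted bound.

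Next, for $f$ supported in a single cube $Q_0\in\cQ_0$, writing $f=\mathbf{1}_{Q_0}g$ with $g\in L^2$, I pick any $y_0\in Q_0$ and apply Cauchy--Schwarz over $Q\in\cQ_0$:
\begin{align*}
\|Af\|_{X^{1,2}}
&=\sum_{Q\in\cQ_0}\|\mathbf{1}_Q Af\|_{L^2}\\
&\le\Bigl(\sum_{Q}(1+|x_Q-y_0|/M)^{-2N}\Bigr)^{1/2}\Bigl(\sum_{Q}(1+|x_Q-y_0|/M)^{2N}\|\mathbf{1}_Q Af\|_{L^2}^2\Bigr)^{1/2}.
\end{align*}
The first factor is comparable to $\bigl(M^d\int_{\rd}(1+|z|)^{-2N}\,dz\bigr)^{1/2}\lesssim M^{d/2}$, the integral converging precisely because $2N>d$. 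For the second factor I use $(1+|x_Q-y_0|/M)\lesssim(1+|x-y_0|/M)$ on $Q$ and the weighted operator estimate above, controlling it by $\|(1+|x-y_0|/M)^N A\mathbf{1}_{Q_0}g\|_{L^2}\lesssim\|g\|_{L^2}=\|f\|_{X^{1,2}}$. Summing over $Q_0$ by the triangle inequality extends the bound to arbitrary $f\in X^{1,2}$, finishing the endpoint.

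The main obstacle is the weighted operator estimate: the expansion has to be arranged so that \emph{only} one-directional iterated commutators $\mathrm{Ad}_l^k(A)$ appear, since the hypothesis provides no control on mixed commutators $\mathrm{Ad}_{l_1}\mathrm{Ad}_{l_2}\cdots(A)$. The integer $N=\lfloor d/2\rfloor+1$ is then exactly the smallest one with $2N>d$, which is precisely what makes the Cauchy--Schwarz sum over cubes converge with the correct $M^{d/2}$ factor.
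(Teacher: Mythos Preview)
Your argument is correct and follows essentially the same route as the paper's proof: derive a weighted $L^2$ estimate of the form $\|\langle\,\cdot-y_0\rangle^N A\mathbf{1}_Q\|_{L^2\to L^2}\lesssim M^N$ from the commutator hypothesis, use it together with Cauchy--Schwarz over the cubes $\cQ_0$ (the choice $N=\lfloor d/2\rfloor+1$ being exactly what makes the weight sum converge) to obtain the $X^{1,2}$ endpoint with constant $\sim M^{d/2}$, and then interpolate with the $L^2$ bound. The paper presents this as a sketch with pointers to \cite{JN95} (passing through the two-sided weighted norm $\|\langle\,\cdot-n\rangle^k A\langle\,\cdot-n\rangle^{-k}\|_{L^2\to L^2}$ and the seminorm $|||A|||_k$), whereas you carry out the one-sided localized version directly via the binomial identity for $(x_l-y_{0,l})^N A$; the two are equivalent formulations of the same mechanism.
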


\begin{proof}
The proof can be found in \cite{JN94}, but the result is not explicitly stated there in the present form. Hence, for the benefit of the reader we point out detailed references of the main steps.\par
By the computations at the end of \cite[page 261]{JN95}, the assumption implies the $L^2$ weighted estimate
\[
\|\langle \cdot-n\rangle^k A \langle \cdot-n\rangle^{-k}\|_{L^2\to L^2}\lesssim_d M^k, \quad \forall n\in\mathbb{Z}^d,\quad k=\lfloor d/2\rfloor+1,
\]
 By \cite[Formula (3.7)]{JN95} this last formula implies that
\[
|||A|||_{k}:=\|A\|_{L^2\to L^2}+\sup_{n\in {\bZ}^d} \|\langle \cdot-n\rangle^k A \mathbf{1}_{Q_n}\|_{L^2\to L^2} \lesssim_d M^{k},\quad k=\lfloor d/2\rfloor+1,
\]
where $Q_n=n+[0,1]^d$. Since $k>d/2$ we can apply the interpolation inequality in \cite[Theorem 2.4]{JN95} with $\beta=k$ and we obtain 
\[
\|Af\|_{X^{1,2}}\lesssim_d \|A\|_{L^2\to L^2}^{1-{d/(2(\lfloor d/2\rfloor+1))}}M^{d/2} \|f\|_{X^{1,2}}.
\]
By interpolation with the $L^2\to L^2$ estimate, we deduce
\eqref{limitl1}.

\end{proof}
\section{Boundedness of $\phi(H)$}
This section is devoted to the proof of the following result, which is an intermediate step for Theorem \ref{mainteo}.
\begin{theorem}\label{mainteo0}
Let $H$ satisfy the assumption ${\bf (H)}$. Then for every $p\in[p_0,p'_0]$ we have 
\begin{equation}\label{eq1}
\|\phi(2^k H) f\|_{L^p}\lesssim\|f\|_{L^p}.
\end{equation}
uniformly for $\phi$ in bounded subsets of $C^\infty_c(\R)$ and $k\in\bZ$.
\end{theorem}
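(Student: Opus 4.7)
The plan is to reduce to $p=p_0$ with $k=0$ and then route $L^{p_0}$-boundedness through the amalgam space $X^{p_0,2}$. Since $\phi(H)^*=\bar\phi(H)$ and the class of bounded subsets of $C^\infty_c(\R)$ is closed under complex conjugation, duality covers $p\in[2,p'_0]$, and interpolation with the trivial $L^2\to L^2$ estimate from the spectral theorem reduces \eqref{eq1} to the endpoint $p=p_0$. Remark \ref{rem4} combined with the spectral identity $\phi(H_k)=S_{\lambda_k}\phi(2^kH)S_{\lambda_k}^{-1}$, together with the $L^{p_0}$-invariance (up to a cancelling multiplicative constant) of conjugation by the dilation $S_{\lambda_k}$, then reduces matters to the single case $k=0$: it suffices to prove $\|\phi(H)f\|_{L^{p_0}}\lesssim\|f\|_{L^{p_0}}$ uniformly for $\phi$ in bounded subsets of $C^\infty_c(\R)$ and for $H$ satisfying {\bf (H)} with uniform constants.

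The second step is to factor $\phi(H)=\psi(H)\,e^{-H}$ with $\psi(\lambda)=\phi(\lambda)e^\lambda\in C^\infty_c(\R)$ and to chain
\[
L^{p_0}\xrightarrow{e^{-H}}X^{p_0,2}\xrightarrow{\psi(H)}X^{p_0,2}\hookrightarrow L^{p_0}.
\]
The first arrow is exactly what \eqref{tre} and \eqref{quattro} (Remark \ref{rem2}, applied with $j=0$ and $t=1$) give through Proposition \ref{pro6} with $q_1=p_0$, $q_2=2$, $p=p_0$. The embedding $X^{p_0,2}\hookrightarrow L^{p_0}$ follows from a local H\"older inequality on each unit cube, using $p_0\le 2$ and $|Q|=1$. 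The middle arrow I plan to obtain from Theorem \ref{teo7} applied to $A=\psi(H)$: since $\psi(H)$ is $L^2$-bounded by the spectral theorem, the hypothesis of that theorem reduces to the uniform commutator bound
\[
\|{\rm Ad}_l^k(\psi(H))\|_{L^2\to L^2}\lesssim 1,\qquad 0\le k\le\lfloor d/2\rfloor+1,\ 1\le l\le d.
\]

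The main obstacle is this last commutator bound, which is the only place where \eqref{due} really enters. By summing the pointwise ratio $\langle x_Q-n\rangle^k/\langle x_{Q'}-n\rangle^k\lesssim(1+{\rm dist}(Q,Q'))^k$ against \eqref{due}, one first obtains the weighted $L^2$ estimate $\|\langle\cdot-n\rangle^N e^{-H}\langle\cdot-n\rangle^{-N}\|_{L^2\to L^2}\lesssim 1$, uniformly in $n\in\bZ^d$, for $N=\lfloor d/2\rfloor+1$. To pass from the heat operator to the spectral cut-off $\psi(H)$ I would use the Helffer--Sj\"ostrand functional calculus
\[
\psi(H)=\frac{1}{\pi}\int_\C\bar\partial\widetilde\psi(z)(H-z)^{-1}\,dA(z),
\]
combined with a contour-integral representation of the resolvent in terms of the analytically continued heat semigroup $e^{-wH}$ on $\{\mathrm{Re}\,w\ge0\}$; the weighted $L^2$ bounds on $e^{-wH}$, inherited from \eqref{due} by analytic continuation and Stein interpolation, transfer to $(H-z)^{-1}$, the loss $|\mathrm{Im}\,z|^{-1}$ being absorbed by the arbitrary-order decay $|\bar\partial\widetilde\psi(z)|\lesssim_N|\mathrm{Im}\,z|^N$ of the almost analytic extension. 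The delicate part is to control this transfer uniformly in the commutator order $k\le\lfloor d/2\rfloor+1$ and in the complex parameter $w$ off the real axis; the remaining steps are scaling, duality and interpolation combined with the functional-analytic machinery of Section~2.
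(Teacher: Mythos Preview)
Your overall architecture---reduce to $k=0$, $p=p_0$, then factor through $X^{p_0,2}$ using a smoothing operator on one side and a commutator criterion on the other---matches the paper. The difference, and the gap, lies in how you propose to obtain the commutator bounds on the spectral cut-off.

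The paper never attempts to bound ${\rm Ad}_l^k(\psi(H))$ directly. Instead it passes to the \emph{bounded} resolvent $R=(I+H)^{-1}$, proves $\|{\rm Ad}_l^k(R)\|_{L^2\to L^2}\lesssim 1$ using only the real-time heat formula $R=\int_0^\infty e^{-t}e^{-tH}\,dt$ and \eqref{due}, and then reaches $\phi(H)=\psi(R)$ through the Fourier inversion $\psi(R)=(2\pi)^{-1}\int e^{i\xi R}\widehat\psi(\xi)\,d\xi$ together with the Duhamel identity for ${\rm Ad}_l(e^{-i\xi R})$. The point is that every step uses $e^{-tH}$ only for $t>0$ real, which is all that hypothesis {\bf (H)} provides.

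Your route, by contrast, hinges on weighted $L^2$ bounds for $e^{-wH}$ with $w$ complex, obtained ``by analytic continuation and Stein interpolation'' from \eqref{due}. This is where the argument breaks down under {\bf (H)}. Stein interpolation needs two endpoints: on the imaginary axis $e^{-isH}$ is merely unitary and carries no off-diagonal decay whatsoever, so interpolating against it destroys the weight; and analytic continuation of a bound that is merely \emph{algebraic} in ${\rm dist}(Q,Q')$ does not propagate into a sector the way Davies' perturbation method does for Gaussian kernels. Without complex-time weighted bounds you cannot control $(H-z)^{-1}$ for ${\rm Re}\,z>0$ through a deformed heat contour, and the Helffer--Sj\"ostrand integral then has nothing to act on. In short, the ``delicate part'' you flag is not just delicate---under the stated hypotheses it does not go through, and the resolvent detour in the paper is precisely what circumvents it.
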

First of all we observe that it suffices to consider the case $k=0$. Indeed, as observed in Remark \ref{rem4}, the same assumption {\bf (H)} holds for the operator 
\[
H_k:= 2^k S_{\lambda_k}H S_{\lambda_k}^{-1},\qquad \lambda_k=2^{k/m},\quad k\in\bZ,
\]
uniformly with respect to $k$.
On the other hand, by the spectral calculus 
\[
f(H_k)=S_{\lambda_k}f(2^k H) S_{\lambda_k}^{-1}
\]
and intertwining with $S_{\lambda_k}$ preserves the $(p,p)$ norm. \par
Let us therefore prove \eqref{eq1} for $k=0$. \par
We begin with an easy lemma.
\begin{proposition}\label{pro7}
We have the estimates
\[
\|e^{-tH}f\|_{X^{p_0,2}}\lesssim (1+t^{-\frac{d}{m}\big(\frac{1}{p_0}-\frac{1}{2} \big)})\|f\|_{L^{p_0}},\quad t>0.
\]
\end{proposition}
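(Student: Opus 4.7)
The proof is essentially a two-scale Schur estimate: use Proposition \ref{pro6} at the natural scale $j$ associated with $t$ (where the hypothesis \textbf{(H)} applies directly), then pass back to unit cubes via Proposition \ref{pro5}. Specifically, choose $j\in\bZ$ with $2^{-j}\leq t^{1/m}<2^{-j+1}$. By Remark \ref{rem2}, the estimate \eqref{uno} is equivalent to \eqref{tre}--\eqref{quattro}, which are precisely the two hypotheses of Proposition \ref{pro6} for $A=e^{-tH}$ with $q_1=p_0$, $q_2=2$, both constants satisfying $M_1,M_2\lesssim 2^{jd(1/p_0-1/2)}$. Applying Proposition \ref{pro6} at scale $j$ with $p=p_0$, and noting that $X^{p_0,p_0}_j=L^{p_0}$, yields
\[
\|e^{-tH}f\|_{X^{p_0,2}_j}\lesssim 2^{jd(1/p_0-1/2)}\|f\|_{L^{p_0}}.
\]

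Next I would invoke Proposition \ref{pro5} with $p=p_0$, $q=2$ to transfer this bound from scale $j$ to the fixed scale $0$:
\[
\|e^{-tH}f\|_{X^{p_0,2}}\leq \max\{1,2^{-jd(1/p_0-1/2)}\}\,\|e^{-tH}f\|_{X^{p_0,2}_j}.
\]
Now I distinguish two regimes. When $t^{1/m}\leq 1$, the choice of $j$ forces $j\geq 0$, the maximum equals $1$, and the bound becomes $\lesssim 2^{jd(1/p_0-1/2)}\|f\|_{L^{p_0}}\sim t^{-\frac{d}{m}(1/p_0-1/2)}\|f\|_{L^{p_0}}$. When $t^{1/m}>1$, we may (adjusting the constants via Remark \ref{rem1}) take $j\leq 0$, the maximum equals $2^{-jd(1/p_0-1/2)}$, and this factor exactly cancels the gain at scale $j$, leaving $\|e^{-tH}f\|_{X^{p_0,2}}\lesssim \|f\|_{L^{p_0}}$. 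Adding the two regimes gives the stated inequality.

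The plan is essentially routine once one identifies the correct scale; there is no real obstacle. The only point deserving care is the matching between the dyadic scale $j$ dictated by $t$ and the fixed scale in the definition of $X^{p_0,2}$, which is handled cleanly by Proposition \ref{pro5} — the scale-dependent factor produced by Proposition \ref{pro6} is exactly inverted by the embedding factor in the large-$t$ regime, and is retained as the sharp small-$t$ singularity in the other.
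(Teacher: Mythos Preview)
Your proof is correct and follows exactly the same route as the paper: apply Proposition \ref{pro6} at the dyadic scale $j$ determined by $t$ (using \eqref{tre}--\eqref{quattro} from Remark \ref{rem2}), identify $X^{p_0,p_0}_j=L^{p_0}$, and then pass to scale $0$ via Proposition \ref{pro5}. The paper compresses your two-regime analysis into a single sentence, but the content is identical; your parenthetical appeal to Remark \ref{rem1} is in fact unnecessary, since $t^{1/m}>1$ already forces $j\le 0$ by the defining inequality $t^{1/m}<2^{-j+1}$.
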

\begin{proof}
By \eqref{uno}, Remark \ref{rem2} and Proposition \ref{pro6} we have, for $2^{-j}\leq t^{1/m}<2^{-j+1}$, 
\[
\|e^{-tH}f\|_{X^{p_0,2}_j}\lesssim 2^{dj\big(\frac{1}{p_0}-\frac{1}{2} \big)}\|f\|_{X^{p_0,p_0}_j}.
\]
Since $X^{p_0,p_0}_j=L^{p_0}$, the embedding in Proposition \ref{pro5} gives the desired conclusion.

\end{proof}

Consider now the resolvent operator
\[
R=(I+H)^{-1}.
\]
\begin{proposition}\label{pro8}
If $\beta>\frac{d}{m}\big(\frac{1}{p_0}-\frac{1}{2}\big)$ we have
\[
\|R^\beta f\|_{X^{p_0,2}}\lesssim \|f\|_{L^{p_0}}.
\]
\end{proposition}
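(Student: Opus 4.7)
The natural approach is subordination: since $H\geq 0$ is self-adjoint, for $\beta>0$ we have the identity
\[
R^\beta=(I+H)^{-\beta}=\frac{1}{\Gamma(\beta)}\int_0^\infty t^{\beta-1}e^{-t}\,e^{-tH}\,dt,
\]
which follows from the scalar formula $\lambda^{-\beta}=\Gamma(\beta)^{-1}\int_0^\infty t^{\beta-1}e^{-t\lambda}dt$ via the spectral theorem applied to $\lambda=1+\mu$, $\mu\in\sigma(H)\subset[0,\infty)$. The plan is to push the $X^{p_0,2}$ norm inside the integral and invoke Proposition \ref{pro7} pointwise in $t$.

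More precisely, first I would verify (by Minkowski's integral inequality, which is legitimate because $X^{p_0,2}$ is a mixed $\ell^{p_0}(L^2)$ norm with $p_0\geq 1$) the bound
\[
\|R^\beta f\|_{X^{p_0,2}}\lesssim \int_0^\infty t^{\beta-1}e^{-t}\,\|e^{-tH}f\|_{X^{p_0,2}}\,dt.
\]
Then Proposition \ref{pro7} gives $\|e^{-tH}f\|_{X^{p_0,2}}\lesssim(1+t^{-\frac{d}{m}(\frac{1}{p_0}-\frac{1}{2})})\|f\|_{L^{p_0}}$, hence
\[
\|R^\beta f\|_{X^{p_0,2}}\lesssim \|f\|_{L^{p_0}}\int_0^\infty t^{\beta-1}e^{-t}\Bigl(1+t^{-\frac{d}{m}(\frac{1}{p_0}-\frac{1}{2})}\Bigr)dt.
\]

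It remains only to check that this integral is finite. Near $t=\infty$ the factor $e^{-t}$ ensures convergence for every $\beta$. Near $t=0$ the critical term behaves like $t^{\beta-1-\frac{d}{m}(\frac{1}{p_0}-\frac{1}{2})}$, which is integrable exactly when $\beta>\frac{d}{m}(\frac{1}{p_0}-\frac{1}{2})$, i.e.\ precisely under the hypothesis. This yields the claimed estimate.

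The only real subtlety is the first step (interchanging the $X^{p_0,2}$ norm with the subordination integral): the integrand $t\mapsto e^{-tH}f$ should be interpreted as a Bochner integral in an appropriate sense, and one should first establish the identity on a dense class (e.g.\ on $L^{p_0}\cap L^2$, so that everything is well-defined in $L^2$ by the spectral calculus) and then extend by density, using the a priori bound from Proposition \ref{pro7} to control the tails. Once this is handled, the argument is essentially a routine application of Minkowski plus Proposition \ref{pro7}.
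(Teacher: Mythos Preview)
Your proof is correct and follows essentially the same approach as the paper: the paper's own argument consists of exactly the subordination formula $R^\beta=\Gamma(\beta)^{-1}\int_0^\infty t^{\beta-1}e^{-t}e^{-tH}\,dt$, Minkowski's inequality for integrals, and Proposition~\ref{pro7}. You have simply spelled out the integrability check and the density/Bochner-integral remark in more detail than the paper does.
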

\begin{proof}
It is sufficient to use the formula 
\begin{equation}\label{eqint}
R^\beta=\frac{1}{\Gamma(\beta)}\int_0^{+\infty} t^{\beta-1} e^{-t} e^{-tH}\, dt,
\end{equation}
together with Proposition \ref{pro7} and  Minkowski's inequality for integrals.
\end{proof}

The criterion in Theorem \ref{teo7} allows one to transfer estimates in amalgam spaces from $R$ to $e^{-i\xi R}$, $\xi\in\R$.
\begin{proposition}\label{pro9}
We have 
\[
\|e^{-i\xi R}f\|_{X^{p_0,2}}\lesssim (1+|\xi|)^{d(1/p_0-1/2)}\|f\|_{X^{p_0,2}},\quad \xi\in\R.
\]
\end{proposition}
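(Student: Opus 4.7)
\medskip

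\textbf{Plan.} I would apply Theorem \ref{teo7} to $A=e^{-i\xi R}$, which is unitary on $L^2$ (since $R$ is self-adjoint and bounded), taking $M\asymp 1+|\xi|$ so that the resulting exponent $d(1/p_0-1/2)$ matches the target. Everything therefore reduces to verifying the commutator bound
\[
\|{\rm Ad}_l^k(e^{-i\xi R})\|_{L^2\to L^2}\leq (C(1+|\xi|))^k,\qquad 0\leq k\leq N=\lfloor d/2\rfloor+1.
\]

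\smallskip

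\textbf{Step 1: commutator estimates for $R$.} I would first prove that $\|{\rm Ad}_l^k(R)\|_{L^2\to L^2}\leq C^k$ for $k\leq N$. Writing $R=\int_0^{+\infty}e^{-t}e^{-tH}\,dt$, note that the integral kernel of ${\rm Ad}_l^k(e^{-tH})$ is $(x_l-y_l)^k p_t(x,y)$. For $Q,Q'\in\cQ_J$ with $2^{-J}\leq t^{1/m}<2^{-J+1}$ we have $|x_l-y_l|\leq C\,2^{-J}(1+2^J\,{\rm dist}(Q,Q'))$ on $Q\times Q'$, so
\[
\|\mathbf{1}_Q {\rm Ad}_l^k(e^{-tH})\mathbf{1}_{Q'}\|_{L^2\to L^2}\leq (C\,2^{-J})^k(1+2^J\,{\rm dist}(Q,Q'))^k\|\mathbf{1}_Q e^{-tH}\mathbf{1}_{Q'}\|_{L^2\to L^2}.
\]
Since $k\leq N$, assumption \eqref{due} (together with its dual version in Remark \ref{rem3}) makes the sums over $Q$ and $Q'$ controlled by $C^k 2^{-Jk}$. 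Schur's test (as in Proposition \ref{pro6} at $q_1=q_2=2$) gives $\|{\rm Ad}_l^k(e^{-tH})\|_{L^2\to L^2}\leq C^k t^{k/m}$, and integrating against $e^{-t}$ yields $\|{\rm Ad}_l^k(R)\|_{L^2\to L^2}\leq C^k \Gamma(k/m+1)\leq C_1^k$ for $k\leq N$.

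\smallskip

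\textbf{Step 2: transfer to $e^{-i\xi R}$ via Duhamel.} Set $U(\xi)=e^{-i\xi R}$. Differentiating and solving the inhomogeneous ODE $(\partial_\xi+iR)[x_l,U]=-i[x_l,R]U$ with zero initial data gives
\[
[x_l,U(\xi)]=-i\int_0^\xi U(\xi-s)\,[x_l,R]\,U(s)\,ds.
\]
Iterating this identity $k$ times, ${\rm Ad}_l^k(U(\xi))$ becomes a finite sum of terms of the form
\[
c\int_{\Delta_r(\xi)} U(s_0)\,{\rm Ad}_l^{a_1}(R)\,U(s_1)\cdots {\rm Ad}_l^{a_r}(R)\,U(s_r)\,ds,
\]
where $r\leq k$, the $a_j\geq 1$ satisfy $\sum a_j=k$, $\Delta_r(\xi)$ is a simplex of volume $|\xi|^r/r!$, and the combinatorial coefficients $c$ depend only on $k$. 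Since each $U(s)$ is an $L^2$ isometry and each ${\rm Ad}_l^{a_j}(R)$ is bounded by $C_1^{a_j}$ from Step~1, each term is bounded by $C_1^k|\xi|^r/r!$. Summing over the (finitely many, $k\leq N$) admissible $(r,a_1,\dots,a_r)$ gives $\|{\rm Ad}_l^k(U(\xi))\|_{L^2\to L^2}\leq (C(1+|\xi|))^k$.

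\smallskip

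\textbf{Step 3: conclusion.} With $\|U(\xi)\|_{L^2\to L^2}=1$ and the bound of Step~2, Theorem \ref{teo7} applied with $M=C(1+|\xi|)$ yields $\|e^{-i\xi R}f\|_{X^{p_0,2}}\lesssim (1+|\xi|)^{d(1/p_0-1/2)}\|f\|_{X^{p_0,2}}$, as required.

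\smallskip

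The only nontrivial point is the combinatorial bookkeeping in Step~2; however this is a routine consequence of Duhamel's formula, and once the $L^2$-boundedness of the commutators of $R$ is known from the off-diagonal estimate \eqref{due}, the proof is mechanical. I expect no difficulty in Step~1 beyond the observation that multiplication by $|x_l-y_l|^k$ costs exactly $k$ powers of the cube sidelength $2^{-J}$, which is precisely absorbed by the extra decay in \eqref{due} (valid for $k\leq N$).
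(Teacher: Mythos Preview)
Your proof is correct and follows essentially the same route as the paper: reduce via Theorem~\ref{teo7} to commutator bounds for $e^{-i\xi R}$, transfer these by iterated Duhamel to the bounds $\|{\rm Ad}_l^k(R)\|_{L^2\to L^2}\lesssim 1$, and establish the latter from the integral representation \eqref{eqint} and the off-diagonal decay \eqref{due} via Schur's test. The one point you gloss over --- that a pointwise bound $|x_l-y_l|^k\leq (C\,2^{-J})^k(1+2^J{\rm dist}(Q,Q'))^k$ on $Q\times Q'$ automatically controls the operator norm of $\mathbf{1}_Q\,{\rm Ad}_l^k(e^{-tH})\,\mathbf{1}_{Q'}$ --- is not literally true for arbitrary kernels (Schur multiplication by a bounded function of $(x,y)$ need not preserve $L^2\to L^2$ norms); the paper fixes this by expanding $(x_l-y_l)^k$ trinomially around the cube centers $z_{Q,l},z_{Q',l}$, so that each term factors as a bounded multiplier in $x$, the constant $(2^j(z_{Q,l}-z_{Q',l}))^\alpha$, and a bounded multiplier in $y$.
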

\begin{proof}
By the criterion in Theorem \ref{teo7}, it is sufficient to prove that 
\[
\|{\rm Ad}^k_l(e^{-i\xi R})f\|_{L^2}\lesssim (1+|\xi|)^{k}\|f\|_{L^2}
\]
for $k=0,1,\ldots,\lfloor d/2\rfloor+1$, $l=1,\ldots,d$. \par
 Using the formula 
\[
{\rm Ad}_l^1 (e^{-i\xi R})=-i\int_0^\xi e^{-isR}{\rm Ad}_l^1(R) e^{-i(\xi-s)R}\, ds
\]
repeatedly, we are reduced to prove that
\begin{equation}\label{eq15}
\|{\rm Ad}_l^k(R)u\|_{L^2}\lesssim \|u\|_{L^2}
\end{equation}
for $k=0,1,\ldots,\lfloor d/2\rfloor+1$.\par
Now, setting $R(x,y)$ for the distribution kernel of $R$, that of the operator ${\rm Ad}_l^k(R)$ is $ (x_l-y_l)^k R(x,y)$. Using the integral representation \eqref{eqint} with $\beta=1$, and Minkowski's inequality for  integrals we see that it is sufficient to prove that, for $2^{-j}\leq t^{1/m}<2^{-j+1}$, the operator
\[
Af(x):=2^{jk} \int_{\rd} (x_l-y_l)^k p_t(x,y) f(y)\, dy
\]
is bounded on $L^2=X^{2,2}_j$ uniformly with respect to $j\in\bZ$, for $k=1,2,\ldots,\lfloor d/2\rfloor+1$ where $p_t(x,y)$ denotes the distribution kernel of $e^{-tH}$ (in fact, the exponential factor in \eqref{eqint} compensates the factor $2^{-jk}\leq t^{k/m}$ which has been introduced).  \par
Now, to prove the boundedness of $A$ on $L^2=X^{2,2}_j$, we use Proposition \ref{pro6}.  Hence, by self-adjointness we are reduced to prove that
\begin{equation}\label{eq4}
\sup_{Q'\in\cQ_j}\sum_{Q\in\cQ_j}\|\mathbf{1}_Q A \mathbf{1}_{Q'}\|_{L^{2}\to L^{2}}\leq C
\end{equation}
for a constant $C$ independent of $j$.\par
To this end, observe that, if $Q=z_Q+2^{-j}[0,1]^d$, $Q'=z_{Q'}+2^{-j}[0,1]^d$, then
\begin{multline}
\mathbf{1}_Q A \mathbf{1}_{Q'}
=\sum_{\alpha+\beta+\gamma=k}\frac{k!}{\alpha!\beta!\gamma!} (2^j(z_{Q,l}-z_{Q',l}))^\alpha\\
\times (2^j(x_{l}-z_{Q,l}))^\beta \mathbf{1}_Q e^{-tH}\mathbf{1}_{Q'} (2^j(z_{Q',l}-y_{l}))^\gamma.
\end{multline}
Formula \eqref{eq4} then follows from the assumption \eqref{due} and the elementary estimates
\[
\|(2^j(x_{l}-z_{Q,l}))^\beta\mathbf{1}_Q\|_{L^2\to L^2}\lesssim 1,\qquad \|\mathbf{1}_{Q'} (2^j(z_{Q',l}-y_{l}))^\gamma\|_{L^2\to L^2}\lesssim 1.
\]
\end{proof}

Now we continue with the proof of Theorem \ref{mainteo0}.
Writing 
\[
\phi(R)=(2\pi)^{-1} \int_{-\infty}^{+\infty} e^{i\xi R} \widehat{\phi}(\xi)\, d\xi
\]
and using Proposition \ref{pro9} we deduce that
\begin{equation}\label{eq16}
\|\phi(R)f\|_{X^{p_0,2}}\lesssim \|(1+|\xi|)^{d(1/p_0-1/2)}\widehat{\phi}\|_{L^1} \|f\|_{X^{p_0,2}}.
\end{equation}
Now, given $\phi\in C^\infty_0(\R)$ we can find $\psi\in C^\infty_0(\R_+)$ such that $\psi((\lambda+1)^{-1})=\phi(\lambda)$ for $\lambda\geq0$, so that $\phi(H)=\psi(R)$ is bounded on $X^{p_0,2}$. Finally, for $\phi\in C^\infty_0(\R)$ we write 
\[
\phi(H)=R^{-\beta} \phi(H)\cdot R^\beta,
\]
 with $\beta>\frac{d}{m}\big(\frac{1}{p_0}-\frac{1}{2}\big)$. The operator $R^\beta$ is then bounded $L^{p_0}\to X^{p_0,2}$ by Proposition \ref{pro8}, whereas the first factor $R^{-\beta} \phi(H)$ is bounded on $X^{p_0,2}$ by what we just proved (applied to a function in $C^\infty_c(\R)$ with is equal to $(\lambda+1)^\beta \phi(\lambda)$ for $\lambda\geq0$). Since $X^{p_0,2}\hookrightarrow L^{p_0}$, $f(H)$ is bounded on $L^{p_0}$, and therefore on every $L^p$, $p\in[p_0,p'_0]$, by duality and interpolation with the $L^2$ case.\par
 The uniformity of the estimate when $\phi$ varies in bounded subsets of $C^\infty_c(\R)$ also follows from \eqref{eq16} and this last argument.

This concludes the proof of Theorem \ref{mainteo0}.

\section{Proof of the main result (Theorem \ref{mainteo})}

By using the same scaling argument as in the proof of Theorem \ref{mainteo0} 
 it is sufficient to prove that
\begin{equation}\label{1.5}
\|e^{-it H} \phi(H)\|_{L^p\to L^p}\leq C(1+|t|)^{d |\frac{1}{2}-\frac{1}{p}|},
\end{equation}
uniformly for $\phi$ in bounded subsets of $C^\infty_0(\R)$. \par
We will adopt the notation ${\rm Ad}(A)=[x_j,A]$, $j=1,\ldots,d$, for the commutator $[x_j,A]$, as in Theorem \ref{teo7}, omitting the subscript $j$ for simplicity. Below we will prove that, for $k=\lfloor d/2\rfloor+1$,
\begin{equation}\label{1.6}
\|{\rm Ad}^l (R^{2^{k+1}-2}e^{-itH})\|_{L^2\to L^2}\leq C(1+|t|)^{l},\qquad l\leq k.
\end{equation}
This implies, from Theorem \ref{teo7}, that
\[
\| R^{2^{k+1}-2}e^{-itH}\|_{X^{p_0,2}\to X^{p_0,2}}\leq C (1+|t|)^{d(1/p_0-1/2)}.
\]
Combining this estimate with Proposition \ref{pro8} and Theorem \ref{mainteo0}  we obtain 
\begin{align*}
\|e^{-itH} &\phi(H)\|_{L^{p_0}\to X^{p_0,2}}\\
&\lesssim\| R^{2^{k+1}-2}e^{-itH}\|_{X^{p_0,2}\to X^{p_0,2}}  \|R^{\beta-2^{k+1}+2} \|_{L^{p_0}\to X^{p_0,2}} \|R^{-\beta} \phi(H)\|_{L^{p_0}\to L^{p_0}}\\
&\lesssim (1+|t|)^{d(1/p_0-1/2)},
\end{align*}
where we choose 
\[
\beta>2^{k+1}-2+\frac{d}{m}\big(\frac{1}{p_0}-\frac{1}{2}\big)
\]
in order to apply Proposition \ref{pro8}. Using the inclusion $X^{p_0,2}\hookrightarrow L^{p_0}$ we deduce \eqref{1.5} for $p=p_0$.
 By duality and interpolation with the $L^2$ case we get \eqref{1.5}.\par
It remains to prove \eqref{1.6}. This will be done on induction on $k=0,1,\ldots,\lfloor d/2\rfloor+1$.\par
First we prove the following result.
\begin{proposition}\label{pro1.3}
For every $k=1,\ldots,\lfloor d/2\rfloor+1$, the operator ${\rm Ad}(R^{2^{k+1}-2}e^{-itH})$ is given by a (finite) linear combination of operators of the following type:
\[
 R^{\mu_1}\, R^{2^k-2} e^{-itH} {\rm Ad}(R^{\mu_2})\,R^{\mu_3},\qquad \mu_1,\mu_2,\mu_3\in\mathbb{N},
\]
\[
R^{\mu_1}\, {\rm Ad}(R^{\mu_2})\,R^{2^k-2} e^{-itH}\,  R^{\mu_3},\qquad \mu_1,\mu_2,\mu_3\in\mathbb{N},
\]
and
\[
\int_0^t R^{2^k-2}\, e^{-isH}\, {\rm Ad}(R)\, R^{2^k-2}
e^{-i(t-s)H}\,ds.
\]
\end{proposition}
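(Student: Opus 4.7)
The plan is to split $R^{2^{k+1}-2}e^{-itH}$ symmetrically around $e^{-itH}$ as
\[
R^{2^{k+1}-2}e^{-itH}=R^{2^k-1}\,e^{-itH}\,R^{2^k-1}
\]
(legal because $R=(I+H)^{-1}$ commutes with $e^{-itH}$ by the spectral calculus) and then to apply the Leibniz rule for ${\rm Ad}$ once to this triple product, obtaining the three summands
\[
{\rm Ad}(R^{2^k-1})\,e^{-itH}\,R^{2^k-1}\;+\;R^{2^k-1}\,{\rm Ad}(e^{-itH})\,R^{2^k-1}\;+\;R^{2^k-1}\,e^{-itH}\,{\rm Ad}(R^{2^k-1}).
\]
Each of these three pieces then has to be massaged into one of the stated forms.

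For the middle term I would combine two ingredients: Duhamel's formula
\[
{\rm Ad}(e^{-itH})=-i\int_0^t e^{-isH}[x_j,H]\,e^{-i(t-s)H}\,ds
\]
and the algebraic identity $[x_j,H]=-R^{-1}[x_j,R]R^{-1}$, obtained by differentiating $R(I+H)=I$. Substituting and using once more that $e^{-isH}$ commutes with $R$, the two bare $R^{-1}$'s get absorbed into the surrounding $R^{2^k-1}$ factors, and the middle term collapses to $i$ times the Type 3 operator. For the first outer term, I would commute $e^{-itH}$ with the adjacent $R^{2^k-1}$, peel off one factor of $R$, and apply the reverse-Leibniz identity
\[
{\rm Ad}(R^{2^k-1})\cdot R={\rm Ad}(R^{2^k})-R^{2^k-1}\,{\rm Ad}(R)
\]
to display it as a sum of two Type 2 operators; the mirror version applied to the third outer term produces two Type 1 operators. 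In total one obtains $\mathrm{Ad}(R^{2^{k+1}-2}e^{-itH})$ as a linear combination of five terms of the three required types.

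The only conceptual obstacle is that the naked commutator $[x_j,H]$ is unbounded in typical examples (it equals $2\partial_{x_j}$ when $H=-\Delta$), so the formal calculation above must be justified on a suitable dense core where $x_j$, $H$ and $R^{-1}$ all act. The resolvent identity $[x_j,H]=-R^{-1}[x_j,R]R^{-1}$ is precisely what ensures that, at every stage of the decomposition, the "dangerous" factors are flanked by enough $R$'s to yield bounded operators — a point that is essential for the subsequent induction on $k$ to close the proof of \eqref{1.6}.
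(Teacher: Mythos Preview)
Your argument is correct, but it takes a genuinely different route from the paper's. The paper proves the proposition by \emph{induction} on $k$: the base case $k=1$ coincides with your computation specialized to $R^2e^{-itH}=R\,e^{-itH}\,R$, while for the inductive step the paper uses the asymmetric splitting
\[
R^{2^{k+1}-2}e^{-itH}=R^{2^{k-1}}\bigl(R^{2^{k}-2}e^{-itH}\bigr)R^{2^{k-1}},
\]
applies Leibniz to this triple product, and invokes the inductive hypothesis on the middle factor ${\rm Ad}(R^{2^k-2}e^{-itH})$; the key arithmetic is $R^{2^{k-1}}\cdot R^{2^{k-1}-2}=R^{2^k-2}$, which upgrades the inductive Type 3 term to level $k$. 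By contrast, your symmetric splitting $R^{2^k-1}e^{-itH}R^{2^k-1}$ handles all $k$ at once: the crucial point, which you identify, is that absorbing the two $R^{-1}$'s from $[x_j,H]=-R^{-1}{\rm Ad}(R)R^{-1}$ into the surrounding $R^{2^k-1}$'s produces precisely the $R^{2^k-2}$ factors required in the Type 3 integral. Your direct argument is shorter and avoids the inductive bookkeeping.

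One minor simplification: the reverse-Leibniz manoeuvre on the outer terms is unnecessary. Since $R$ commutes with $e^{-itH}$, the first outer term already equals
\[
{\rm Ad}(R^{2^k-1})\,e^{-itH}\,R^{2^k-1}={\rm Ad}(R^{2^k-1})\,R^{2^k-2}\,e^{-itH}\,R,
\]
which is of Type 2 with $(\mu_1,\mu_2,\mu_3)=(0,2^k-1,1)$; likewise the third outer term is directly of Type 1 with $(\mu_1,\mu_2,\mu_3)=(1,2^k-1,0)$. So you in fact get three terms, not five.
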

\begin{proof}
The result is obtained by induction on $k$. It is true for $k=1$. Indeed,
\[
{\rm Ad}(Re^{-itH}R)= {\rm Ad}(R) e^{-itH}R +Re^{-itH}{\rm
Ad}(R)-i\int_0^t e^{-isH}R[x_j,H]Re^{-i(t-s)H}\,ds.
\]
Now, $R[x_j,H]R=[R,x_j]=-{\rm Ad}(R)$, so that the result for $k=1$ is verified. \par
 Let us assume it holds for $k-1$ and compute
\begin{align*}
{\rm Ad} (R^{2^{k+1}-2}e^{-itH})&= {\rm Ad} (R^{2^{k-1}}(R^{2^{k}-2}  e^{-itH})R^{2^{k-1}})\\
&={\rm Ad}(R^{2^{k-1}})\  R^{2^{k}-2}  e^{-itH}\ R^{2^{k-1}}+R^{2^{k-1}}\ {\rm Ad}(R^{2^{k}-2}  e^{-itH})  \ R^{2^{k-1}}\\
&\qquad\qquad\qquad\qquad+  R^{2^{k-1}}  \ R^{2^{k}-2}  e^{-itH}\ {\rm Ad} (R^{2^{k-1}}).
\end{align*}
The first and the last term are of the desired form. The second one  is also of the desired form by the inductive hypothesis, because
\[
R^{2^{k-1}}R^{2^{k-1}-2}=R^{2^k-2}.
\]
\end{proof}

Let us now prove \eqref{1.6} by induction on $k=0,1,\ldots,\lfloor d/2\rfloor+1$. The result is trivially true for $k=0$. Assume it holds for $k-1$, and write, for $l\leq k$,
\[
{\rm Ad}^l(R^{2^{k+1}-1}e^{-itH})= {\rm Ad}^{l-1}\left({\rm Ad}(R^{2^{k+1}-1}e^{-itH})\right).
\]
Using the above Proposition \ref{pro1.3}, the formula
\[
{\rm Ad}^{l-1}(A_1\cdots A_n)=\sum_{m_1+\ldots+m_n=l-1}\frac{(l-1)!}{m_1!\cdots m_n!}{\rm Ad}^{m_1}(A_1)\cdots {\rm Ad}^{m_n}(A_n),
\]
as well as the inductive hypothesis and \eqref{eq15} we obtain \eqref{1.6}.

\section{Examples and concluding remarks}
\subsection{Schr\"odinger operators} Here is our main example. 
We recall the definition of the Kato class from \cite[pag. 453]{simon}.
\begin{definition}\label{defkato}
 A real-valued measurable function in $\rd$ is called to lie in the Kato class $\mathcal{K}(\rd)$ if and only if
\begin{itemize}
\item[a)] If $d=3$,
\[
\lim_{r\downarrow 0} \sup_{x\in\rd}\int_{|x-y|<r}\frac{|V(y)|}{|x-y|^{d-2}}\, dy=0.
\]
\item[b)]  If $d=2$
\[
\lim_{r\downarrow 0} \sup_{x\in\rd}\int_{|x-y|<r}\log|x-y|^{-1}|V(y)|\, dy=0.
\]
\item[c)] If $d=1$
\[
\sup_{x\in\rd}\int_{|x-y|<1}|V(y)|\, dy<\infty.
\]
We moreover define $\mathcal{K}_{\rm loc}(\rd)$ as the space of functions $V$ such that $V{\bf 1}_B\in \mathcal{K}(\rd)$ for every ball $B$.
\end{itemize}
\end{definition}
It follows from H\"older inequality that $L^p_{unif}(\rd)\subset \mathcal{K}(\rd)$ if $p>d/2$ ($d\geq2$), where the uniform $L^p$ spaces are defined by the norm
\[
\|V\|^p_{L^p_{unif}}=\sup_{x\in\rd}\int_{|x-y|<1}|V(y)|^p\, dy<\infty.
\]
For example, $1/|x|^\alpha$ belongs to $\mathcal{K}(\rd)$ if $0<\alpha<2$. \par
Now, we have the following result.
\begin{theorem}\label{mainteo3}
Consider the operator 
\[
H=(i \nabla-A)^2+V,
\]
 where $A\in L^2_{loc}(\rd;\rd)$, $V=V_+ -V_-$ (positive and negative parts), 
 \[
 V_+\in\mathcal{K}_{loc}(\rd),\quad V_-\in \mathcal{K}(\rd).
 \] Then $H$ has a self-adjoint extension in $L^2(\rd)$, bounded from below. \par
 Moreover for $1\leq p\leq\infty$, $s=d\big|\frac{1}{p}-\frac{1}{2}\big|$, the operator $e^{-itH}$ satisfies the estimate
\begin{equation}
\|e^{-itH}\phi(2^{-k}H)f\|_{L^p}\lesssim(1+2^{k}|t|)^{s}\|f\|_{L^p},\quad k\geq 0,\ t\in\R,
\end{equation}
uniformly for $\phi$ in bounded subsets of $C^\infty_c(\R)$. 
\end{theorem}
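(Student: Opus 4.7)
The plan is to reduce Theorem~\ref{mainteo3} to the variant of Theorem~\ref{mainteo} outlined in the introduction, in which an exponential factor $e^{ct}$ is allowed in the right-hand sides of \eqref{uno}--\eqref{due} at the cost of restricting the conclusion to $k\ge 0$. Two ingredients need to be supplied: first, a self-adjoint semibounded realization of the formal operator $H=(i\nabla-A)^2+V$; second, a pointwise Gaussian heat-kernel upper bound (with an admissible $e^{ct}$ factor) for its non-negative shift.

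For the self-adjointness I would use the quadratic form
\[
q(u)=\int_{\rd}|(i\nabla-A)u|^2\,dx+\int_{\rd}(V_+-V_-)|u|^2\,dx,\qquad u\in C_c^\infty(\rd).
\]
With $A\in L^2_{loc}(\rd;\rd)$ the magnetic kinetic form is closable and non-negative (Leinfelder--Simader); $V_+\in\mathcal{K}_{loc}(\rd)$ contributes a non-negative closable form via monotone convergence; and $V_-\in\mathcal{K}(\rd)$ is infinitesimally form-bounded with respect to $-\Delta$ by the Aizenman--Simon theory, and, via the diamagnetic inequality, also with respect to the magnetic kinetic form. The KLMN theorem then furnishes a self-adjoint extension bounded from below; a convenient reference covering precisely this class is Broderix--Hundertmark--Leschke.

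Let $c_0\ge 0$ be such that $\tilde H:=H+c_0\ge 0$. The heart of the proof is the pointwise heat-kernel bound
\[
|e^{-t\tilde H}(x,y)|\lesssim e^{ct}\,t^{-d/2}\,e^{-b|x-y|^2/t},\qquad t>0,
\]
which I would obtain by combining the diamagnetic inequality $|e^{-tH}f(x)|\le e^{-t(-\Delta+V)}|f|(x)$ (equivalently, the Feynman--Kac--It\^o formula) with the classical Gaussian estimate for Schr\"odinger semigroups with Kato-class potential due to Simon and Aizenman--Simon, followed by absorbing $e^{-tc_0}$ into the exponential factor. Integrating this kernel bound over pairs of dyadic cubes of sidelength $2^{-j}\sim t^{1/2}$ then verifies \eqref{uno} with $p_0=1$ (from the $L^1\to L^\infty$ bound on the kernel), while the Gaussian decay in $|x-y|$ supplies the polynomial weight $(1+2^j\,{\rm dist}(Q,Q'))^N$ required for \eqref{due}. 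This yields the modified Assumption \textbf{(H)} for $\tilde H$ with $p_0=1$, $m=2$.

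The $k\ge 0$ variant of Theorem~\ref{mainteo} applied to $\tilde H$ then gives the estimate \eqref{1.4} for $\tilde H$, uniformly for the cutoff in bounded subsets of $C_c^\infty(\R)$. The transfer from $\tilde H$ to $H$ is a routine bookkeeping step: $e^{-itH}=e^{itc_0}e^{-it\tilde H}$ has the same $L^p$ operator norm, and $\phi(2^{-k}H)=\psi_k(2^{-k}\tilde H)$ with $\psi_k(\lambda)=\phi(\lambda-2^{-k}c_0)$, a translate of $\phi$ by at most $c_0$ for $k\ge 0$, so the family $\{\psi_k\}_{k\ge 0}$ remains in a bounded subset of $C_c^\infty(\R)$. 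The genuine technical point is the pointwise Gaussian heat-kernel bound in the joint presence of a rough magnetic potential and a Kato-class electric potential; this is exactly where the diamagnetic inequality combined with Simon's theory of Schr\"odinger semigroups is indispensable, and the rest of the argument is assembly.
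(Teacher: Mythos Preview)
Your proposal is correct and follows essentially the same route as the paper: self-adjointness via the form approach, the Gaussian heat-kernel bound obtained by combining Simon's estimate for the non-magnetic case with the diamagnetic inequality, a shift of $H$ to a non-negative operator, application of Theorem~\ref{mainteo}, and the transfer back via the translated cutoffs $\psi_k(\lambda)=\phi(\lambda-2^{-k}c_0)$. The only cosmetic difference is that the paper chooses the shift $c'$ large enough to absorb the factor $e^{ct}$ completely, so that $H+c'I$ satisfies the \emph{standard} Assumption~{\bf (H)} and the original Theorem~\ref{mainteo} applies directly, whereas you keep the exponential factor and invoke the $k\ge 0$ variant mentioned in the introduction; taking $c_0\ge c$ in your argument would make the two presentations identical.
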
 
\begin{proof}
The first part of the statement is proved in \cite[Sections B1, B13]{simon}. Moreover, combining the estimates for the heat kernel in \cite[Proposition B.6.7]{simon} (case $A\equiv0$) with the diamagnetic inequality (\cite[Theorem B.13.2]{simon}), one sees that the operator $e^{-tH}$ has a measurable kernel $p_t(x,y)$ satisfying 
\[
|p_t(x,y)|\lesssim t^{-d/2} \exp(ct)\exp\Big(-\frac{|x-y|^{2}}{4t}\Big),\quad \ t>0,\ x,y\in\rd,
\]
for some $c>0$. Hence, for some $c'>0$, the operator $H+c' I$ in non-negative and satisfies the assumption {\bf (H)} with $p_0=1$, $m=2$. We can then apply Theorem \ref{mainteo} to the operator $H+c'I$, with the cut-off $\phi(x-2^{-k}c')$ and obtain the desired conclusion for $k\geq0$, since the functions $\phi(x-2^{-k}c')$ for $k\geq0$ vary in a bounded subset of $C^\infty_c(\R)$ if $\phi$ does. 
\end{proof}
\subsection{Operators with variable coefficients}
Following \cite{dancona}, consider the operator 
\[
-Hf=\nabla^b\cdot(a(x)\nabla^b f)-c(x) f,\qquad \nabla^b=\nabla+ib(x)
\]
where $a(x)=[a_{jk}(x)]_{j,k=1}^d$, $b(x)=(b_1(x),\ldots,b_d(x))$ and $c(x)$ satisfy:
\[
\textrm{$a,b,c$ are real-valued, $a_{jk}=a_{kj}$ and $NI\geq a(x)\geq \nu I$ for some $N\geq\nu>0$}.
\]
Suppose moreover $d\geq 3$, and the following conditions in terms of Lorentz spaces:
\[
a\in L^\infty,\ b\in L^4_{loc}\cap L^{d,\infty},\ \nabla \cdot b\in L^2_{loc},\ c\in L^{d/2,1},\ \|c_{-}\|_{L^{d/2,1}}<\epsilon.
\]
 Then if $\epsilon>0$ is sufficiently small the operator $H$ extends to a non-negative selfadjoint operator in $L^2(\rd)$ and its heat kernel $e^{-tH}$ satisfies the Gaussian bound \[
|p_t(x,y)|\lesssim t^{-d/2}\exp\Big(-\frac{|x-y|^{2}}{Ct}\Big),\quad \ t>0,\ x,y\in\rd,
\]
for some $C>0$; see \cite[Propositions 6.1, 6.2]{dancona}. Hence Theorem \ref{mainteo} applies with $p_0=1$.\par
 We also refer to \cite{DP05,DFV10,O04} for early results on Gaussian bounds for variable coefficient second-order operators.
\subsection{Higher order operators (constant coefficients)} Consider the operator 
\[
H=(-\Delta)^k,\quad k\in\bN,\ k\geq 1.
\]
 Let us verify that the assumption {\bf (H)} is verified with $p_0=1$, $m=2k$. It is sufficient to show that the heat kernel $p_t(x,y)$ satisfies the estimate
\[
|p_t(x,y)|\lesssim t^{-d/(2k)}\exp\big(-b(t^{-1/{(2k)}}|x-y|)^{\frac{2k}{2k-1}}\big),\quad \ t>0,\ x,y\in\rd,
\]
for some $b>0$. This is known, but we provide here a proof for the sake of completeness and also because this method of proof extends to any elliptic, homogeneous and non-negative constant coefficient operator. We leave this generalization to the interested reader.\par By homogeneity we are reduced to prove that, if $f(\xi)=\exp\big({-|\xi|^{2k}}\big)$, then its inverse Fourier transform verifies
\[
|{f}^{\vee} (x)|\lesssim \exp \big(-b|x|^{\frac{2k}{2k-1}}\big),\quad \ x\in\rd.
\]
It is easy to see that this inequality is equivalent to
\[
|x^\alpha {f}^{\vee}(x)|\leq C^{|\alpha|+1}(\alpha!)^{1-\frac{1}{2k}}, \quad\alpha\in\bN^d,\ x\in\rd,
\]
for some constant $C>0$ independent of $\alpha$ (see e.g. \cite[Proposition 6.1.7]{nicola}). This in turn follows if we prove that
\begin{equation}\label{des}
\|\partial^\alpha f\|_{L^1}\leq C^{|\alpha|+1}(\alpha!)^{1-\frac{1}{2k}}, \quad\alpha\in\bN^d.
\end{equation}
 This can be verified by means of the Cauchy estimates, as follows.\par Observe that $f$ has an entire extension $f(\zeta)=\exp(-(\zeta_1^2+\ldots+\zeta_d^2)^k)$, $\zeta=(\zeta_1,\ldots,\zeta_d)\in\bC^d$, satisfying
\begin{equation}\label{hpm2}
| f(\zeta)|\leq \exp\Big({-(1/2)|{\rm Re\, \zeta}|^{2k}+C|{\rm Im}\, \zeta|^{2k}}\Big),\quad \zeta\in\mathbb{C}^d.
\end{equation}
Therefore, given $\xi=(\xi_1,\ldots,\xi_d)\in\rd$, we consider  the polydisk 
\[
\tilde{B}(\xi,R)=\prod_{j=1}^d B(\xi_j,R)=\{\zeta\in\mathbb{C}^d:\, |\zeta_j-\xi_j|\leq R\},
\]
 with $R=(1+|\alpha|)^{1/(2k)}$, $\alpha\in\bN^d$. Observe that \eqref{hpm2} implies
\begin{equation}\label{E1}
\sup_{\zeta\in \tilde{B}(\xi,R)}|f(\zeta)|\leq e^{-(1/2)(|\xi|-R)_+^{2k}+C(\sqrt{d}R)^{2k}},
\end{equation}
where $(\cdot)_+$ denotes the positive part. \par
The Cauchy integral formula
$$\partial^\alpha_{\xi} f(\xi)=\frac{\a!}{(2\pi i)^d}\int\cdots\int_{\partial B(\xi_1,R)\times\cdots\times\partial B(\xi_d,R)}\frac{f(\zeta_1,\dots,\zeta_d)}{(\zeta_1-\xi_1)^{\a_1+1}\cdots(\zeta_d-\xi_d)^{\a_d+1}}d\zeta_1\cdots d\zeta_d
$$
and the estimate \eqref{E1} yield
\begin{equation}\label{E2}
|\partial^\alpha_{\xi} f(\xi)|\leq e^{-(1/2)(|\xi|-R)_+^{2k}}\frac{\alpha! e^{C(\sqrt{d}R)^{2k}}}{R^{|\alpha|}}\leq e^{-(1/2)(|\xi|-R)_+^{2k}} \frac{C_1^{|\alpha|+1}\alpha!}{{(1+|\alpha|)}^{|\alpha|/(2k)}}.
\end{equation}
Using Stirling formula, we have
$$\frac{1}{(1+|\alpha|)^{|\alpha|/(2k)}}\leq \frac{C_2^{|\a|}}{(\a!)^{1/(2k)}},
$$
which combined with \eqref{E2} gives
\[
|\partial^\alpha_{\xi} f(\xi)|\leq C^{|\alpha|+1}(\alpha!)^{1-\frac{1}{2k}}e^{-(1/2)(|\xi|-R)_+^{2k}}, \quad\alpha\in\bN^d.
\]
Integrating separetely for $|\xi|>R$ and $|\xi|\leq R$ gives the desired estimate \eqref{des} (the factor coming from the volume of the ball of radius $R$ is absorbed by taking a slightly bigger constant $C$ in \eqref{des}).
\subsection{Higher order operators (measurable coefficients)} Following \cite{dav95}, let $k\geq1$ and consider the operator 
\[
H=\sum_{|\alpha|\leq k,\,|\beta|\leq k}D^\alpha (a_{\alpha,\beta}(x)D^\beta f),
\]
where $a_{\alpha,\beta}(x)=\overline{a_{\beta,\alpha}(x)}$ are complex-valued, bounded measurable functions. The associated quadratic form is
\[
Q(f,f)=\int_{\rd}\sum_{|\alpha|\leq k,\,|\beta|\leq k}a_{\alpha,\beta}(x)D^\beta f(x)\overline{D^\alpha f(x)}\, dx.
\]
We suppose that $Q=Q_0+Q_1+Q_2$ where $Q_0, Q_1,Q_2$ have the same form as $Q$, but $Q_0$ is homogeneous and elliptic of degree $2k$ and has constant coefficients, $Q_1$ is homogeneous of degree $2k$ and is non-negative in the sense that
\[
\sum_{|\alpha|=k,\,|\beta|=k}a_{1,\alpha,\beta}(x) v_\beta\overline{v_\alpha}\geq0
\]
for all $v_\alpha\in\mathbb{C}$, $x\in\rd$, whereas $Q_2$ contains lower order terms. \par
We distinguish two cases. If $2k>d$ then it was proved in \cite[Lemma 19]{dav95} that the heat kernel satisfies the estimate
\[
|p_t(x,y)|\lesssim t^{-d/(2k)}\exp(ct)\exp\big(-b(t^{-1/{(2k)}}|x-y|)^{\frac{2k}{2k-1}}\big),\quad \ t>0,\ x,y\in\rd,
\]
for some $b,c>0$.\par
In the case $2k<d$ the above estimate can fail. However, let
\[
p_0=\frac{2d}{d+2k}.
\]
Then $p_0<2$, and
\[
\|\mathbf{1}_{B(x,t^{1/(2k)})} e^{-tH} \mathbf{1}_{B(y,t^{1/(2k)})}\|_{L^{2}\to L^{p'_0}}\lesssim t^{-\frac{d}{2k}\big(\frac{1}{2}-\frac{1}{p'_0}\big)}\exp(ct)\exp\Big(-b\big(t^{-1/(2k)}|x-y|\big)^{\frac{2k}{2k-1}}\Big),
\]
for some $b,c>0$; see \cite[Lemma 24]{dav95}. Hence, by arguing as in Remark \ref{rem2} we see that \eqref{gge} holds with $m=2k$ except for a further factor $\exp(ct)$ in the right-hand side.\par
By the same arguments as in Section 5.1 we deduce that in both cases the conclusion of Theorem \ref{mainteo0} holds (if $2k>d$ with $p_0=1$, if $2k<d$ with $p_0=2d/(d+2k)$) at least for all $k\geq0$.

\subsection{Fractional Laplacian}
Consider the fractional Laplacian 
\[
H=(-\Delta)^\alpha,\quad\alpha>0.
\]
By using standard results on homogeneous distributions it is easy to see that its heat kernel satisfies 
\[
0<p_t(x,y)\lesssim t^{-\frac{d}{2\alpha}}(1+t^{-\frac{1}{2\alpha}}|x-y|)^{-(d+2\alpha)},
\]
Hence we see that \eqref{uno} is satisfied with $p_0=1$ for every $\alpha>0$, whereas \eqref{due} holds for $2\alpha> \lfloor d/2\rfloor+1$ (both with $m=2\alpha$). \par
\subsection{Estimates in Sobolev spaces via Time-frequency Analysis}
For $1\leq p\leq\infty$, $s\in\R$, let $L^p_s$ stand for the usual Sobolev (or Bessel potential) space, i.e.
\[
\|f\|_{L^p_s}:=\|(1-\Delta)^s f\|_{L^p}.
\]

In this section we prove some optimal estimates in such spaces, relying simply on known results from Time-frequency Analysis. \par
To be precise, we recall the definition of modulation spaces \cite{fei,book,ruz,baoxiang}. They can be defined similarly to the Besov spaces, but for a different geometry: the dyadic annuli in the frequency domain are replaced by isometric boxes ${Q}_k$, $k\in\bZ^d$, which allows a finer analysis in many respects (see \cite{baoxiang}). Namely, for $1\leq p,q\leq\infty$, $s\geq 0$, one defines
\begin{equation}\label{prima}
M^{p,q}_s=\Big\{f\in\cS'(\rd): \|f\|_{M^{p,q}_s}:=\Big(\sum_{k\in\bZ^d}\langle k\rangle^{sq}\|\square_k f\|_{L^p}^q\Big)^{1/q}<\infty\Big\}
\end{equation}
(with obvious changes if $q=\infty$), where $\square_k$ are Fourier multipliers with symbols $\mathbf{1}_{{Q}_k}$ conveniently smoothed; we also set $M^{p,q}$ for $M^{p,q}_0$. We recapture in particular the $L^2$-based Sobolev spaces $M^{2,2}_s=H^s$, whereas the space $M^{\infty,1}$ coincides with the so-called Sj\"ostrand's class \cite{lerner,sjostrand}.
Now, the inclusion relations with Besov spaces are well understood, see e.g. \cite{baoxiang}. The key estimates here are however the inclusion relations with the Sobolev spaces \cite[Theorem 1.3]{ks}, namely
\begin{equation}\label{eq22}
L^p_s\hookrightarrow M^p\hookrightarrow L^p,\qquad 1<p\leq 2,\ s=2d\big(\frac{1}{p}-\frac{1}{2}\big).
\end{equation}
Hence, for a linear operator $A$ we have 
\[
\|A\|_{L^p_s\to L^p}\lesssim\|A\|_{M^p\to M^p}
\]
for $1<p\leq 2$ and $s$ as above. The interesting fact is that this gives optimal estimates in many cases.\par For example, for $e^{it\Delta}$ one can combine the estimate
\[
\|e^{it\Delta}\|_{M^p\to M^p}\lesssim (1+|t|)^{d|\frac{1}{p}-\frac{1}{2} |},\quad 1\leq p\leq\infty,
\]
see e.g. \cite[Proposition 6.6]{baoxiang}, with \eqref{eq22} and deduce that
\[
\|e^{it\Delta}\|_{L^p_s\to L^p}\lesssim (1+|t|)^{d|\frac{1}{p}-\frac{1}{2}|}
\]
for $1<p<\infty$, $s=2d\big|\frac{1}{p}-\frac{1}{2}\big|$ (the case $p>2$ follows by duality). \par
This is sharp, and of course known. Consider however the following class of Fourier integral operators $A$, generalizing the propagator $e^{i\Delta}$. Suppose that $A$ has the form
\begin{equation}\label{fi1}
Af(x)=(2\pi)^{-d}\int_{\rd} e^{i\Phi(x,\eta)}a(x,\eta)\widehat{f}(\eta)\, d\eta
\end{equation}
with a phase ${\Phi}\in C^\infty(\R^d\times \R^d)$, real-valued and satisfying
\begin{equation}\label{fi2}
|\partial^\alpha_z \Phi(z)|\leq C_\alpha\quad |\alpha|\geq 2,\ z\in\rdd
\end{equation}
as well as
\begin{equation}\label{fi3}
\Big| {\rm det}\,\Big( \frac{\partial^2 \Phi}{\partial x_j\partial \eta_j} \Big) \Big|\geq \delta>0\quad {\rm in}\ \rd\times\rd,
\end{equation}
and a symbol ${a}$ in the class $S^0_{0,0}$, i.e.
\begin{equation}\label{fi4}
|\partial^\alpha_z a(z)|\leq C_\alpha,\quad \alpha\in\bN^d,\ z\in\rdd. 
\end{equation}
Operators of this type arise as propagators for Schr\"odinger equations with an Hamiltonian having quadratic growth (see \cite{helffer}), in contrast with Fourier integral operators with a phase positively homogeneous of degree $1$ in $\eta$, which are instead applied in the study of hyperbolic problems. For the latter class, the problem of the local and global $L^p$ continuity is well understood \cite{cnrtrans,coriasco,ferreira,seeger}. Instead, we are not aware of similar results for operators $A$ of the above form, except for the $L^2$ results in \cite{asada,ruz-sug}. Now, it was proved in \cite{cnr} that, under the above assumption, $A$ and its adjoint are bounded on $M^p$, for every $1\leq p\leq\infty$. As a consequence, we have the following result. 
\begin{theorem} Let $A$ be a Fourier integral operator as in \eqref{fi1}--\eqref{fi4}. Then for every $1<p<\infty$, $s=2d\big|\frac{1}{p}-\frac{1}{2}\big|$, we have 
\[
\|Af\|_{L^p}\lesssim \|f\|_{L^p_{s}},\quad 1<p\leq 2,
\] 
\[
\|Af\|_{L^p_{-s}}\lesssim \|f\|_{L^p},\quad 2\leq p<\infty.
\]
\end{theorem}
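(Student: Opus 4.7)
The plan is to chain together the two ingredients assembled immediately before the statement: the continuous embeddings $L^p_s\hookrightarrow M^p\hookrightarrow L^p$ in \eqref{eq22} (valid for $1<p\leq 2$, $s=2d(1/p-1/2)$), and the $M^p\to M^p$ continuity of any Fourier integral operator satisfying \eqref{fi1}--\eqref{fi4}, which is established in \cite{cnr} for every $1\leq p\leq\infty$.

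For the range $1<p\leq 2$ with $s=2d(1/p-1/2)$, I would simply compose these three estimates: given $f\in L^p_s$, the first embedding in \eqref{eq22} gives $\|f\|_{M^p}\lesssim\|f\|_{L^p_s}$; the continuity result of \cite{cnr} yields $\|Af\|_{M^p}\lesssim\|f\|_{M^p}$; and the second embedding in \eqref{eq22} produces $\|Af\|_{L^p}\lesssim\|Af\|_{M^p}$. Chaining these three inequalities proves the first stated estimate.

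For the range $2\leq p<\infty$ I would argue by duality. The adjoint $A^*$ is likewise bounded on $M^{p'}$ for every $1\leq p'\leq\infty$ (this is precisely why \cite{cnr} is stated symmetrically for $A$ and its adjoint), so the previous case applied to $A^*$ with conjugate exponent $p'=p/(p-1)\in(1,2]$ gives
\[
\|A^*g\|_{L^{p'}}\lesssim\|g\|_{L^{p'}_s},\qquad s=2d\big|\tfrac{1}{p}-\tfrac{1}{2}\big|.
\]
Dualizing this inequality and using the identification $(L^{p'}_s)'=L^p_{-s}$ produces the second stated estimate $\|Af\|_{L^p_{-s}}\lesssim\|f\|_{L^p}$.

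The only conceivably delicate point is the invocation of \cite{cnr} for $A^*$: an operator $A$ as in \eqref{fi1}--\eqref{fi4} need not have an adjoint of the same form, so one must ensure that the $M^p$-boundedness results of \cite{cnr} are proved symmetrically, which is indeed the case. Once this is acknowledged, the remainder of the argument is bookkeeping with the embedding \eqref{eq22} and standard duality.
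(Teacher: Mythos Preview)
Your proposal is correct and follows essentially the same route as the paper: the paper records the chain $\|A\|_{L^p_s\to L^p}\lesssim\|A\|_{M^p\to M^p}$ coming from \eqref{eq22}, then invokes the $M^p$-boundedness of $A$ and $A^\ast$ from \cite{cnr}, and treats $2\le p<\infty$ by duality exactly as you do. Your explicit remark that one needs the $M^p$-boundedness of $A^\ast$ (rather than assuming $A^\ast$ is again of the form \eqref{fi1}--\eqref{fi4}) matches the paper's care in noting that \cite{cnr} covers both $A$ and its adjoint.
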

Similarly, one can consider Schr\"odinger operators with rough Hamiltonians. We consider here a simple case, to avoid technicalities. 
\begin{theorem} 
Consider a potential $V(x)$ satisfying $\partial^\alpha V\in M^{\infty,1}$, $|\alpha|=2$. Let $H=-\Delta+V(x)$, $1<p<\infty$, $s=2d\big|\frac{1}{p}-\frac{1}{2}\big|$. Then 
\[
\|e^{iH} f\|_{L^p}\lesssim \|f\|_{L^p_s}, \quad 1<p\leq 2,
\]
\[
\|e^{iH} f\|_{L^p_{-s}}\lesssim \|f\|_{L^p}, \quad 2\leq p< \infty.
\]
\end{theorem}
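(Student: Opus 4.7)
The plan is to proceed in perfect parallel with the proof of the preceding theorem on Fourier integral operators, replacing the operator $A$ with the Schr\"odinger propagator $e^{iH}$. The central step is to establish that $e^{iH}$ is bounded on every modulation space $M^p(\rd)$, $1\le p\le \infty$, under the assumption $\partial^\alpha V\in M^{\infty,1}$ for $|\alpha|=2$. Once this is available, the desired $L^p$ bounds follow almost automatically from the sharp inclusion \eqref{eq22}.

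First I would invoke the time-frequency analysis result providing the $M^p\to M^p$ boundedness of $e^{iH}$ under the hypothesis $\partial^\alpha V\in M^{\infty,1}$, $|\alpha|=2$. Heuristically, for potentials whose Hessian lies in the Sj\"ostrand class $M^{\infty,1}$, the Hamiltonian flow of $H$ is sufficiently tame that the propagator admits (say, for the fixed time $t=1$) a representation as a Fourier integral operator of the type \eqref{fi1}--\eqref{fi4}; one then concludes by the result of Cordero--Nicola--Rodino quoted in the proof of the previous theorem. Equivalently, one can appeal directly to the pseudodifferential/Wiener-algebra calculus built around the Sj\"ostrand class, for which $e^{iH}$ is known to be bounded on all modulation spaces.

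With $M^p$-boundedness in hand, for $1<p\le 2$ and $s=2d(1/p-1/2)$ I would factor the propagator through the chain
\[
L^p_s \;\hookrightarrow\; M^p \;\xrightarrow{\,e^{iH}\,}\; M^p \;\hookrightarrow\; L^p,
\]
where the outer embeddings come from \eqref{eq22} and the middle arrow is the boundedness just discussed. This gives the first inequality $\|e^{iH}f\|_{L^p}\lesssim \|f\|_{L^p_s}$. The second inequality, for $2\le p<\infty$, then follows by duality: since $H$ is self-adjoint, $(e^{iH})^*=e^{-iH}$, and the potential $-V$ (associated with $-H$) satisfies the same modulation space condition, so $e^{-iH}$ is also bounded on $M^{p'}$, which by transposition yields the mapping $L^p\to L^p_{-s}$.

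The only genuinely delicate point is the $M^p$-boundedness of $e^{iH}$ itself. The assumption $\partial^\alpha V\in M^{\infty,1}$ for $|\alpha|=2$ is too weak to provide usable pointwise kernel estimates for $e^{iH}$ (in particular, the results of Section 5.1--5.3 do not apply here), so one truly needs the modulation-space / Wiener-algebra machinery rather than a classical stationary phase or heat kernel argument. All remaining steps---inclusion, interpolation, and duality---are purely soft.
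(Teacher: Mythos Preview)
Your approach is essentially the paper's own: invoke the $M^p$-boundedness of $e^{itH}$ from \cite{cnr-rough}, feed it into the embeddings \eqref{eq22}, and dualize for $p\ge 2$. One small slip in your duality step: you write ``the potential $-V$ (associated with $-H$)'', but $-H=\Delta-V$ is not of the form $-\Delta+W$; the clean justification is simply that the result in \cite{cnr-rough} covers $e^{itH}$ for every $t\in\R$, so $e^{-iH}=(e^{iH})^*$ is already bounded on $M^{p'}$ (equivalently, just use that the adjoint of an $M^p$-bounded operator is $M^{p'}$-bounded).
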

Indeed, it was proved in \cite{cnr-rough} that the propagator $e^{itH}$ is bounded in $M^p$, $1\leq p\leq\infty$, $t\in\R$. Much more general Hamiltonians can be treated similarly; we refer to \cite{cnr-rough} and the bibliography therein for more details.  
  
\end{document}